\newtheorem{theorem}{Theorem}[section]
\newtheorem{lemma}[theorem]{Lemma}
\newtheorem{corollary}[theorem]{Corollary}
\newtheorem{definition}[theorem]{Definition}
\newtheorem{claim}[theorem]{Claim}
\newtheorem{fact}[theorem]{Fact}
\newtheorem*{lemma-order}{Lemma~\ref{lem:order}}
\newtheorem*{claim-subinstance}{Claim~\ref{claim: small cover-sets in subinstance}}
\newtheorem*{lemma-solution}{Lemma~\ref{lem: sa feasible solution}}
\newcommand{\LS}{{\rm LS}}
\newcommand{\classP}{{\mathrm{P}}}
\newcommand{\classNP}{{\mathrm{NP}}}
\newcommand{\Xomit}[1]{}
\newcommand{\opt}{{\sf OPT}}
\newcommand{\Real}{{\mathbb R}}
\newcommand{\powerset}{\mathcal{P}}
\newcommand{\iLS}{\textsf{LS}}  
\newcommand{\iLSp}{\textsf{LS}$_+$}
\newcommand{\sa}{\textsf{Sherali-Adams}}
\newcommand{\ls}{\textsf{Lov\'asz-Schrijver}}
\newcommand{\la}{\textsf{Lasserre}}
\newcommand{\setc}{\textsc{Set Cover}}
\def\knapsack {\textsc{Knapsack}}
\def\threesat{\textsc{Max-3SAT}}
\def\chrom{\textsc{Chromatic Number}}
\def\clique{\textsc{Clique}}
\def\ug {\textsc{Unique Games}}
\def\gst {\textsc{Group Steiner Tree}}
\newcommand{\iLP}{\textsf{LP}}
\newcommand{\iSDP}{\textsf{SDP}}
\newcommand{\eps}{\varepsilon}
\def\be {{\bf e}}
\def\bx {{\bf x}}
\def\by {{\bf y}}
\def\bz {{\bf z}}
\def\ba {{\bf a}}
\newcommand{\OPT}{\textsc{OPT}}
\begin{document}
\title{Understanding Set Cover: Sub-exponential Time Approximations and Lift-and-Project Methods}

\author{Eden Chlamt\'a\v{c}\thanks{Research supported in part by an ERC Advanced grant. 
Email: \texttt{chlamtac@cs.bgu.ac.il}} \\ Ben Gurion University
\and Zachary Friggstad\thanks{Email: \texttt{\{zfriggstad,k2georgiou\}@math.uwaterloo.ca}} \\ University of Waterloo
\and Konstantinos Georgiou$^\dagger$ \\ University of Waterloo}

\begin{titlepage}
\maketitle
\thispagestyle{empty}


\begin{abstract}
Recently, Cygan, Kowalik, and Wykurz~[IPL 2009] gave sub-exponential-time approximation algorithms for the \setc\ problem with approximation ratios better than $\ln n$. In light of this result, it is natural to ask whether such improvements can be achieved using lift-and-project methods. We present a simpler combinatorial algorithm which has nearly the same time-approximation tradeoff as the algorithm of Cygan et al., and which lends itself naturally to a lift-and-project based approach.


At a high level, our approach is similar to the recent work of 
 Karlin, Mathieu, and Nguyen [IPCO 2011], who examined a known PTAS for \knapsack\ (similar to our combinatorial \setc\ algorithm) and its connection to hierarchies of 
\iLP\ and 
\iSDP\ relaxations for \knapsack. For \setc, 
 we show that, indeed, using the trick of ``lifting the objective function", we can match the performance of our combinatorial algorithm using the {\iLP} hierarchy of Lov\'asz and Schrijver.
We also show that this trick is essential: even in the stronger {\iLP} hierarchy of Sherali and Adams, the integrality gap remains at least $(1-\eps)\ln n$ at level $\Omega(n)$ (when the objective function is not lifted).

As shown by Aleknovich, Arora, and Tourlakis~[STOC 2005], \setc\ relaxations stemming from {\iSDP} hierarchies (specifically, $\LS_+$) have similarly large integrality gaps. 
 This stands in contrast to \knapsack, where Karlin et al.\ showed that the (much stronger) \la\ {\iSDP} hierarchy reduces the integrality gap to $(1+\eps)$ at level $O(1)$. For completeness, we show that $\LS_+$ also reduces the integrality gap for \knapsack\ to $(1+\eps)$. 
 This result may be of independent interest, as our $\LS_+$-based rounding and analysis are rather different from those of Karlin et al., and to the best of our knowledge this is the first explicit demonstration of such a reduction in the integrality gap of $\LS_+$ relaxations after few rounds.

\end{abstract}
\end{titlepage}
\section{Introduction}

The \setc\ problem is one of the most fundamental and well-studied problems in approximation algorithms, and was one of Karp's original 21 NP-complete problems~\cite{karp}. It can be stated quite plainly: given a finite set $X$ of $n$ items and a collection $\mathcal S \subseteq 2^X$ of $m$ subsets
of $X$ called ``cover-sets'', the \setc\ problem on instance $(X, \mathcal S)$
is the problem of finding the smallest collection $\mathcal C$ of cover-sets in $\mathcal S$ such that $X = \bigcup_{S \in \mathcal C} S$.
That is, every item in $X$ must appear in at least one cover-set in $\mathcal C$. Here, we will consider the minimum cost (or weighted) version of the problem, where each
cover-set $S$ has a nonnegative cost $c(S)$, and the goal is to find a collection of cover-sets with minimum total cost, subject to the above constraint.

As is well known, the problem can be approximated within a logarithmic factor. For instance, Johnson~\cite{johnson} showed that for uniform costs,
the simple greedy algorithm that iteratively chooses the cover-set containing the maximum number of uncovered elements
gives an $H_n$-approximation (where $H_n=\ln n + O(1)$ is the $n$'th harmonic number $\sum_{k=1}^{\lfloor n\rfloor} 1/k$). %
Later, Lov\'asz~\cite{lovasz} showed that the cost of the solution found by this greedy algorithm is at most an $H_n$-factor larger than the optimum value of the natural linear programming (LP) relaxation.

Chv\'atal~\cite{chvatal} extended these results to a greedy algorithm for the weighted case. He also proved that the approximation guarantee of this algorithm is actually only $H_b$ where
$b$ is the size of the largest cover-set in $\mathcal S$, and moreover that this algorithm gives an $H_b$-factor approximation relative to the optimum of the natural {\iLP} relaxation (thus %
extending the integrality gap bound of Lov\'asz~\cite{lovasz}).
Slav\'ik~\cite{slavik} refined the lower-order terms in the analysis of the greedy algorithm's approximation guarantee, giving a tight bound of $\ln n - \ln\ln n + O(1)$. 
Srinivasan~\cite{srinivasan} also improved the lower-order terms in the upper bound on the integrality gap. On the other hand, 
 the integrality gap of the standard {\iLP} relaxation for \setc\ is at least $(1-o(1))\ln n$. Recently, Cygan, Kowalik, and Wykurz \cite{cygan:kowalik:wykurz} demonstrated that Set Cover can be approximated
within $(1 - \epsilon)\cdot \ln n + O(1)$ in time $2^{n^\epsilon + O(\log m)}$. It is interesting to note that this time-approximation tradeoff is essentially optimal assuming Moshkovitz's Projection Games Conjecture~\cite{moshkovitz} and the Exponential Time Hypothesis (ETH)~\cite{IP01}.

From the hardness perspective, 
 Feige~\cite{feige} showed that for every constant $\eps > 0$, there is no
$(1 - \eps)\ln n$-approximation algorithm for \setc\ unless all problems in NP can be solved deterministically in time $n^{O(\log\log n)}$. 
 To date, the strongest hardness of approximation for \setc\ assuming only $\classP\neq\classNP$ gives a $c\ln n$-hardness  %
 for $c \approx 0.2267$ (Alon et al.~\cite{alon:moshkovitz:safra}).

\subsection{Hierarchies of convex relaxations and connection to \knapsack}

One of the most powerful and ubiquitous tools in approximation algorithms has been the use of mathematical programming relaxations, such as linear programming (\iLP) and semidefinite programming ({\iSDP}). The common approach is as follows: solve a convex (\iLP\ or {\iSDP}) relaxation for the 0-1 program, and ``round" the relaxed solution to give a (possibly suboptimal) feasible 0-1 solution. Since the approximation ratio is usually analyzed by comparing the value of the relaxed solution to the value of the output (note that the 0-1 optimum is always sandwiched between these two), a natural obstacle is the worst case ratio between the relaxed optimum and the 0-1 optimum, known as the {\em integrality gap}.

While for many problems, this approach gives optimal approximations (e.g., Raghavendra~\cite{ragh08} shows this for all CSPs, assuming the \ug\ Conjecture), there are still many cases where natural {\iLP} and {\iSDP} relaxations have large integrality gaps. This limitation can be circumvented by considering more powerful relaxations. In particular, Sherali and Adams~\cite{SA90} Lov\'asz and Schrijver~\cite{LS91}, and Lasserre~\cite{Las} each have devised different systems, collectively known as {\em hierarchies} or {\em lift-and-project techniques}, by which a simple relaxation can be strengthened until the polytope (or the convex body) it defines converges to the convex hull of the feasible 0-1 solutions. It is known that, for each of these hierarchies, if the original relaxation has $n$ variables, then the relaxation at level $t$ of the hierarchy can be solved optimally in time $n^{O(t)}$. Thus, to achieve improved approximations for a problem in polynomial (resp.\ sub-exponential time), we would like to know if we can beat the integrality gap of the natural relaxation by using a relaxation at level $O(1)$ (resp.\ $o(n/\log n)$) of some hierarchy.

Initially, this approach was refuted (for specific problems) by a long series of results showing that integrality gaps do not decrease at sufficiently low levels (see, e.g.~\cite{ABLT06,alekhnovich:arora:tourlakis,GMPT,CMM09}). Positive results have also recently emerged (e.g.~\cite{Chl07,MK10,BRS11,GS11}), where improved approximations were given using constant-level relaxations from various hierarchies. For a survey on both positive and negative results, see~\cite{CM-chapter}.

Recently, Karlin et al.~\cite{KMN11} considered how the use of {\iLP} and {\iSDP} hierarchies affects the integrality gap of relaxations for~\knapsack. This is of particular relevance to us, since their approach relies on a well-known PTAS for \knapsack\ which is similar in structure to our own combinatorial algorithm for~\setc. They showed that, while the \sa\ {\iLP} hierarchy requires $\Omega(n)$ levels to bring the integrality gap below $2-o(1)$, level $k$ of the \la\ {\iSDP} hierarchy brings the integrality gap down to $1+O(1/k)$. While we would like to emulate the success of their {\iSDP}-hierarchy-based approach (and give an alternative sub-exponential algorithm for \setc), 
 we note that for \setc\, Alekhnovich et al.~\cite{alekhnovich:arora:tourlakis} have shown that the {\iSDP} hierarchy $\LS_+$, due to Lov\'asz and Schrijver, requires $\Omega(n)$ levels to bring the integrality gap below $(1-o(1))\ln n$. Nevertheless, the comparison is not perfect, since the \la\ hierarchy is much stronger than $\LS_+$. In particular, previously it was not known whether $\LS_+$ also reduces the integrality gap for \knapsack\ (and indeed, the algorithm of Karlin et al.~\cite{KMN11} relied a powerful decomposition theorem for the \la\ hierarchy which does not seem to be applicable to $\LS_+$).

\subsection{Our Results}
To facilitate our lift-and-project based approach, we start by giving in Section~\ref{sec:combapprox sketch} a simple new sub-exponential time combinatorial algorithm for \setc\, which nearly matches the time-approximation tradeoff guarantee in~\cite{cygan:kowalik:wykurz}.
\begin{theorem}\label{thm:approx}
For any (not necessarily constant) $1 \leq d \leq n$, there is an $H_{n/d}$-approximation algorithm for \setc\ running in time ${\rm poly}(n,m) \cdot m^{O(d)}$.
\end{theorem}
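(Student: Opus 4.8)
The plan is to combine exhaustive guessing of part of an optimal solution with the classical greedy algorithm of Chv\'atal on a residual instance. Fix an optimal solution $\mathcal{O}=\{S_1,\dots,S_k\}\subseteq\mathcal{S}$ and order its sets greedily by marginal coverage (for the analysis only): $S_i$ is chosen to maximize $|S_i\setminus(S_1\cup\cdots\cup S_{i-1})|$ among the sets of $\mathcal{O}$ not yet chosen. The algorithm iterates over all subcollections $\mathcal{C}_0\subseteq\mathcal{S}$ with $|\mathcal{C}_0|\le d$ (there are at most $(d+1)\binom{m}{d}=m^{O(d)}$ of them; round $d$ down if needed). For each guess $\mathcal{C}_0$, set the residual universe $X'=X\setminus\bigcup_{S\in\mathcal{C}_0}S$, discard every cover-set $S$ with $|S\cap X'|>n/d$, run Chv\'atal's greedy algorithm on the residual instance $(X',\{S\cap X':S\in\mathcal{S}\})$ over the surviving cover-sets, and record $\mathcal{C}_0$ together with the resulting cover as a candidate solution (whenever it is feasible). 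Finally output the cheapest candidate. Each iteration takes $\mathrm{poly}(n,m)$ time, giving total running time $\mathrm{poly}(n,m)\cdot m^{O(d)}$.

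For correctness it suffices to exhibit one guess yielding a cover of cost at most $H_{n/d}\cdot\OPT$. Take $\mathcal{C}_0=\{S_1,\dots,S_{\min\{d,k\}}\}$. If $k\le d$ this is already a feasible cover of cost $\OPT$, so assume $k>d$. The key averaging observation is that for every $j>d$ the set $S_j$ covers at most $n/d$ elements of $X'=X\setminus(S_1\cup\cdots\cup S_d)$: for each $i\le d$, at the step when $S_i$ was chosen its marginal coverage was at least that of $S_{d+1}$ at that moment, which is at least $|S_{d+1}\cap X'|$ since marginal coverage only shrinks as more elements are covered; hence the pairwise-disjoint sets $S_i\setminus(S_1\cup\cdots\cup S_{i-1})$ for $i\le d$ each have size at least $|S_{d+1}\cap X'|$, so $d\,|S_{d+1}\cap X'|\le|S_1\cup\cdots\cup S_d|\le n$, and by the choice of the ordering $|S_j\cap X'|\le|S_{d+1}\cap X'|\le n/d$ for all $j>d$. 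Consequently $S_{d+1},\dots,S_k$ all survive the pruning step and cover $X'$, while by construction every cover-set of the pruned residual instance has cardinality at most $n/d$.

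By Chv\'atal's theorem, the greedy algorithm on the pruned residual instance returns a cover of cost at most $H_{n/d}$ times the optimum of its natural {\iLP} relaxation, and the latter is at most $c(\{S_{d+1},\dots,S_k\})=\OPT-c(\mathcal{C}_0)$. Hence the candidate solution for this guess costs at most
\[
c(\mathcal{C}_0)+H_{n/d}\bigl(\OPT-c(\mathcal{C}_0)\bigr)\;=\;H_{n/d}\cdot\OPT\;-\;(H_{n/d}-1)\,c(\mathcal{C}_0)\;\le\;H_{n/d}\cdot\OPT,
\]
using $H_{n/d}\ge H_1=1$, which holds because $d\le n$.

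I expect the only delicate point to be the averaging observation together with the final cost accounting: one must peel off exactly the sets of $\mathcal{O}$ that come first in the marginal-coverage order, so that simultaneously (i) the residual optimum uses only cover-sets of size at most $n/d$, making Chv\'atal's $H_{n/d}$ bound applicable, and (ii) the already-paid cost $c(\mathcal{C}_0)$ is multiplied by the favorable factor $1$ rather than $H_{n/d}$ — which is precisely what turns the naive $(1+H_{n/d})$ guarantee into $H_{n/d}$. The remaining ingredients (the $m^{O(d)}$ enumeration bound, the pruning step, and invoking Chv\'atal's LP-relative guarantee) are routine.
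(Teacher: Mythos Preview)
Your proof is correct and follows essentially the same approach as the paper: enumerate all subcollections of size at most $d$, prune cover-sets with more than $n/d$ residual elements, and run greedy on the residual instance; for the analysis, order the optimal solution greedily by marginal coverage, take the first $d$ sets as the guess, and combine the averaging bound $|S_j\cap X'|\le n/d$ for $j>d$ with Chv\'atal's $H_b$ guarantee. Your inline averaging argument is exactly the content of the paper's ordering lemma (Lemma~\ref{lem:order}) specialized to $i'=d$, and your cost accounting $c(\mathcal C_0)+H_{n/d}(\OPT-c(\mathcal C_0))\le H_{n/d}\cdot\OPT$ matches the paper's final inequality.
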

While this theorem is slightly weaker than the previous best known guarantee, our algorithm is remarkably simple, and will be instrumental in designing a similar lift-and-project based \setc\ approximation. 
The algorithm is combinatorial and does not rely on linear programming techniques.
By choosing $d = n^\eps$, we get a sub-exponential time algorithm whose approximation guarantee is better than $\ln n$ by a constant factor.
Next in Section~\ref{sec:hierarchy}, we show that using level $d$ of the linear programming hierarchy of {\ls}~\cite{LS91}, we can match the performance of the algorithm we use to prove Theorem \ref{thm:approx},
though only if the ``lifting" is done after guessing the value of the objective function (using a binary search), and adding this as a constraint a priori.
In this case, the rounding algorithm is quite fast, and avoids the extensive combinatorial guessing of our first algorithm, while the running time is dominated by the time it takes to solve the {\iLP} relaxation.

On the other hand, without the trick of ``lifting the objective function", we show in Section~\ref{sec:sa_lower} that even the stronger {\iLP} hierarchy of {\sa}~\cite{SA90} has an integrality gap of at least $(1-\eps)\ln n$ at level $\Omega(n)$. Specifically, we show the following
\begin{theorem}\label{thm: SA linear tight integrality gap}
For every $0<\eps, \gamma \leq \frac{1}{2}$, and for sufficiently large values of $n$, there are instances of \setc\ on $n$ cover-sets (over a universe of $n$ items) for which the integrality gap of the level-$\lfloor \frac{\gamma(\eps-\eps^2)}{1+\gamma} n \rfloor$ \sa\ \iLP\ relaxation is at least $\frac{1-\eps}{1+\gamma}\ln n$.
\end{theorem}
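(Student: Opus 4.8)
The plan is to produce a (pseudo)random \setc\ instance with $n$ items and $n$ cover-sets, lower-bound its integral optimum by a first-moment argument, and certify a cheap feasible solution of the level-$\lfloor\frac{\gamma(\eps-\eps^2)}{1+\gamma}n\rfloor$ \sa\ relaxation by an explicit product distribution over the cover-sets. Fix a constant $p=p(\eps,\gamma)\in(0,1)$; the clean choice $p=\frac{\gamma+\eps}{1+\gamma}$ (so that $1-p=\frac{1-\eps}{1+\gamma}$) will work. Let $X$ be a universe of $n$ items, and form each of the $n$ cover-sets $S_1,\dots,S_n$ by including each item independently with probability $p$. I would first check that, with probability tending to $1$ as $n\to\infty$: (i) every item lies in at least $d:=(p-o(1))n$ cover-sets, by a Chernoff bound and a union bound over the $n$ items; and (ii) no $k:=(1-o(1))\frac{\ln n}{\ln(1/(1-p))}$ cover-sets cover $X$ --- a fixed such subcollection covers $X$ with probability $(1-(1-p)^k)^n\le\exp(-n(1-p)^k)$, and a union bound over the $\binom nk\le n^k$ choices bounds the failure probability by $\exp(k\ln n-n(1-p)^k)\to 0$, since $n(1-p)^k\gg k\ln n$ for the stated $k$. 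Fix an instance satisfying (i) and (ii); then $\OPT\ge k$.

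For the relaxation, put $t=\lfloor\frac{\gamma(\eps-\eps^2)}{1+\gamma}n\rfloor$ (so that $t<d$ for large $n$), let $\mathcal D$ be the distribution on $\{0,1\}^{\mathcal S}$ that picks each $S_i$ independently with probability $q:=\frac1{d-t+1}\le 1$, and take the candidate \sa\ vector $y_I:=\pr_{\mathcal D}[\text{all of }I\text{ is picked}]=q^{|I|}$ for $|I|\le t$. Being the moment vector of a genuine distribution, $y$ automatically satisfies every consistency and box constraint of the level-$t$ relaxation, so the only thing left is to verify the lift of each covering constraint $\sum_{S\ni e}x_S\ge 1$. Expanding that lift by inclusion--exclusion rewrites it as the statement: for every item $e$ and all disjoint $A,B\subseteq\mathcal S$ with $|A|+|B|\le t-1$, the $\mathcal D$-conditional expectation of the number of picked cover-sets through $e$, given that all of $A$ and none of $B$ is picked, is at least $1$. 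Since at most $t-1$ of the (at least $d$) cover-sets through $e$ lie in $A\cup B$, under the product distribution this conditional expectation is at least $(d-t+1)q=1$. Hence $y$ is feasible, with objective value $\sum_S y_{\{S\}}=nq=\frac{n}{d-t+1}$.

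Combining the two estimates, the integrality gap of the level-$t$ \sa\ relaxation is at least
\[
\frac{\OPT}{n/(d-t+1)}\;\ge\;(1-o(1))\,\frac{\ln n}{\ln(1/(1-p))}\cdot\Bigl(p-\frac tn\Bigr),
\]
and substituting $p=\frac{\gamma+\eps}{1+\gamma}$ and $t/n\to\frac{\gamma(\eps-\eps^2)}{1+\gamma}$, a short calculation --- using only $\ln(1+\gamma)\le\gamma$ and $(1-\eps)\ln\frac1{1-\eps}\le\eps$, which leave a strictly positive slack --- shows the right-hand side exceeds $\frac{1-\eps}{1+\gamma}\ln n$ for all sufficiently large $n$.

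I expect the real difficulty to lie not in the \sa\ machinery but in the tuning of the single free parameter $p$ (equivalently, the common cover-set density $d/n$): raising $d$ lets the product-distribution solution survive to a higher \sa\ level, but it simultaneously shrinks $\OPT$, because denser cover-sets admit more efficient exact covers. Reconciling these opposing effects is precisely what pins the admissible level down to $\Theta_{\eps,\gamma}(n)$ and costs the $\frac1{1+\gamma}$ factor in the gap, and it is the one place where the constants have to be chosen with care. A minor secondary point is the inclusion--exclusion identity that recasts the \sa\ covering lift as a conditional-expectation statement; there the only structural input is that a ``window'' of $t-1$ cover-sets can meet the large family of cover-sets containing any fixed item in at most $t-1$ places, which is what makes the product distribution go through.
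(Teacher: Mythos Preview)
Your argument is correct and in spirit very close to the paper's, but the execution differs in two places that are worth noting. First, the paper does not build the instance directly: it quotes a lemma of Alekhnovich, Arora and Tourlakis giving \setc\ instances in which every element lies in \emph{exactly} $f=(\eps-\eps^2)n$ cover-sets and every feasible cover has size at least $\log_{1+\eps}n$; you instead redo this via the obvious random construction with inclusion probability $p=\frac{\gamma+\eps}{1+\gamma}$ and a Chernoff/union-bound argument, which is fine (and only needs the degree lower bound, not exact regularity). Second, and more interestingly, your \sa\ witness is different from the paper's. The paper takes the negatively-correlated moment vector $y_A=\frac{(f-\ell-1)!}{(f-\ell-1+|A|)!}$ and then spends the bulk of the section (their Lemma~4.2 and two auxiliary lemmas with fairly involved alternating-sum estimates) verifying the lifted box and covering constraints by hand. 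Your product-measure choice $y_A=q^{|A|}$ sidesteps all of that: because it is the genuine moment vector of an i.i.d.\ Bernoulli distribution, the box constraints are automatic and the lifted covering constraint reduces, exactly as you say, to the one-line conditional-expectation check $(d-|B|)q\ge 1$. This is a cleaner route to the same bound; what you give up is only that the paper's witness achieves equality in the tight covering constraint at the $\frac{1}{f-\ell}$ scale rather than a whisker above, a distinction that is invisible at the level of the stated theorem. One small caveat: under the paper's \sa\ convention the multipliers satisfy $|P|+|E|\le\ell$ (not $\le\ell-1$), so your conditional-expectation bound is $(d-\ell)q$ and you should take $q=\frac{1}{d-\ell}$ rather than $\frac{1}{d-\ell+1}$; this shifts the objective by an asymptotically negligible amount and does not affect your final inequality, whose $\gamma\eps^2$ slack absorbs it along with the other $o(1)$ terms.
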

As we have mentioned, the prospect of showing a positive result using {\iSDP} hierarchies is unlikely due to the work of Alekhnovich et al.~\cite{alekhnovich:arora:tourlakis} which gives a similar integrality gap for $\LS_+$.

For completeness, we also show in Section~\ref{sec:knapsack} that the {\la}-hierarchy-based \knapsack\ algorithm of Karlin et al.~\cite{KMN11} can be matched using the weaker $\LS_+$ hierarchy and a more complex rounding algorithm. Specifically, we show that the integrality gap of the natural relaxation for \knapsack\ can be reduced to $1+\eps$ using $O(\eps^{-3})$ rounds of $\LS_+$. This highlights a fundamental difference between \knapsack\ and \setc, despite the similarity between our combinatorial algorithm and the PTAS for \knapsack\ on which Karlin et al.\ rely. Formally, we prove the following:
\begin{theorem}\label{thm:knapsack_ig} The integrality gap of level $k$ of the $\LS_+$ relaxation for \knapsack\ is at most $1+O(k^{-1/3})$.
\end{theorem}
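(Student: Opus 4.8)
The plan is to prove the bound by rounding: for any feasible point $y$ of the level-$k$ $\LS_+$ relaxation, we exhibit an integral feasible solution whose value is at least $\frac{1}{1+O(k^{-1/3})}$ times the value $V:=\sum_i v_i y_i$ of $y$. Since $\LS_+$ relaxes the $0$-$1$ program we have $V\ge\OPT$, so this gives the claimed integrality-gap bound. Fix a parameter $\eps:=\Theta(k^{-1/3})$ and call an item $i$ \emph{large} if $v_i\ge\eps V$ and \emph{small} otherwise. Two easy ingredients drive the argument: (a) the total fractional mass on large items, $\sum_{i\ \mathrm{large}}y_i$, is at most $1/\eps$, since each such $i$ contributes $\eps V\cdot y_i$ to $V$; and (b) if \emph{every} item is small, then the optimal basic solution of the underlying knapsack LP (which dominates $y$ in value, so has value $\ge V$) has a single fractional coordinate, of value $<\eps V$, so discarding it yields an integral feasible solution of value $\ge(1-\eps)V$.

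The main mechanism is the $\LS_+$ conditioning step. From a level-$\ell$ $\LS_+$ point $y$ and a coordinate $i$ with $0<y_i<1$, the positive-semidefinite protection matrix yields level-$(\ell-1)$ $\LS_+$ points $y^{i\to1}$ and $y^{i\to0}$ — the first for the sub-instance forcing item $i$ into the knapsack and reducing the budget by $w_i$, the second for the sub-instance with item $i$ deleted — with $y=y_i\,y^{i\to1}+(1-y_i)\,y^{i\to0}$; in particular the better of the two branches has value at least that of $y$. I would repeatedly pick a large, fractional item and condition on the value-non-decreasing branch, \emph{committing} the item (and charging $w_i$ against the budget) on a $1$-branch and \emph{discarding} it on a $0$-branch. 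Budget feasibility and the lower bound $V$ on the running value are maintained, and each step costs one level. Once no large items remain, ingredient (b) rounds the residual, and adding back the committed items gives an integral feasible solution of value at least $(1-\eps)V$. Commitments are easy to bound: the committed set always has total weight at most $B$, hence is feasible, so its total value is at most $\OPT\le V$ while each committed item is worth $\ge\eps V$, giving at most $1/\eps$ commitments.

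The crux — and where I expect essentially all of the difficulty to lie, and where the exponent $1/3$ is forced — is bounding the number of \emph{discard} steps, since a priori an instance may contain far more than $k$ large items, all of them fractional in $y$. This is exactly the place where the positive-semidefiniteness of $\LS_+$ must be used in an essential, quantitative way (the purely linear \sa\ hierarchy lacks it, which is precisely why \sa\ cannot beat the $\ln n$ gap, by Theorem~\ref{thm: SA linear tight integrality gap}): the PSD moment matrix correlates the large items with one another, and with the budget constraint, and one must turn this into a potential argument showing that each discard makes measurable progress toward eliminating all remaining large-item mass. Carrying this out, one aims to show that $O(\eps^{-3})$ conditioning steps suffice to reach a residual with no large items. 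Trading this level requirement $O(\eps^{-3})\le k$ against the $\Theta(\eps V)$ loss of the final greedy rounding is optimized at $\eps=\Theta(k^{-1/3})$, which is precisely the rate in the statement. The remaining pieces — the greedy rounding of the small-item residual, the accounting of committed weight against the budget, and the level bookkeeping — are routine; the PSD-based progress measure on the large items is the heart of the argument, and also the step most sensitive to the exact form of the $\LS_+$ conditioning and protection inequalities.
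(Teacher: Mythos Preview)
Your outline is sound up to the point you flag as the crux, and you correctly identify that bounding the number of conditioning steps is where positive-semidefiniteness must enter. But the proposal leaves precisely that step unresolved, and the direction you suggest---a potential showing each \emph{discard} makes measurable progress against the remaining large-item mass---does not obviously work and is not what the paper does. Note that while $\sum_{i\ \mathrm{large}}y_i\le 1/\eps$, the \emph{number} of large items in the support of $y$ is unbounded, and after a discard the mass $(y_j-y_{\{i,j\}})/(1-y_i)$ on another large item $j$ may well increase; nor does a discard strictly increase the objective (it merely does not decrease it), so the objective value is not a useful potential for discards either. As written, the heart of the argument is missing.

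The paper's resolution is different: it \emph{never} uses the $0$-branch. It always conditions to~$1$, but sometimes on a \emph{small} item. Concretely: if some large fractional $i$ has $\sum_j r_j y_{\{i,j\}}\ge(1-\eps^2)\,x_i\sum_j r_j x_j$, commit it (at most $\lceil1/\eps\rceil-1$ such steps, by essentially your commit-count argument together with Lemma~\ref{lem:knapsack-0-1}). Otherwise, the PSD inequality $\sum_{i,j}r_ir_jy_{\{i,j\}}\ge\bigl(\sum_i r_ix_i\bigr)^2$ (Lemma~\ref{lem:increase-obj-fun}), combined with the fact that the large items are collectively ``bad'' and still carry at least an $\eps$-fraction of the value, forces some \emph{small} item $i$ to satisfy $\sum_j r_j y_{\{i,j\}}>(1+\eps^3)\,x_i\sum_j r_j x_j$ (Lemma~\ref{lem:strict-increase}); committing that $i$ strictly increases the objective by a factor exceeding $1+\eps^3$. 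Now the objective value \emph{is} a usable potential: it starts at $\ge\opt$, is always $\le 2\opt$, drops by at most a factor $(1-\eps^2)$ in each of the $\le 1/\eps$ large commits, and rises by a factor $>(1+\eps^3)$ in each small commit, so there are fewer than $1/\eps^3$ small commits (Lemma~\ref{lem:recurse-depth}). That is the source of the exponent $1/3$. (Aside: your parenthetical citing Theorem~\ref{thm: SA linear tight integrality gap} concerns \setc, not \knapsack; the relevant \sa\ obstruction for \knapsack\ is the $2-o(1)$ gap of Karlin et~al.)
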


In what follows, and before we start the exposition of our results, we present in Section~\ref{sec: preliminaries on ls} the \ls\ system along with some well-known facts that we will need later on. We end with a discussion of future directions in Section~\ref{sec:conclusion}.


\section{Preliminaries on the \ls\ System}\label{sec: preliminaries on ls}

For any polytope $P$, this system begins by introducing a nonnegative %
 auxiliary variable $x_0$, so that in every constraint of $P$, constants are multiplied by $x_0$.  This yields %
 the cone 
 $K_0(P):=\{(x_0, x_0\bx)\mid x_0\geq 0~\&~\bx \in P\}$. 
 For an $n$-dimensional polytope $P$, the \ls\ system finds a hierarchy of nested cones $K_0(P) \supseteq K_1(P) \supseteq \ldots \supseteq K_n(P)$ (in the \iSDP\ variant, we will write $K^+_t(P)$), defined recursively, and which  enjoy remarkable algorithmic properties. In what follows, let $\powerset_k$ denote the space of vectors indexed by subsets of $[n]$ of size at most $k$, 
 and for any $\by\in\powerset_k$, define the moment matrix $Y^{[\by]}$ to be the square matrix with rows and columns indexed by sets of size at most $\lfloor k/2\rfloor$, where the entry at row $A$ and column $B$ is $y_{A\cup B}$. %
 Also we denote by $\be_0, \be_1, \ldots, \be_n$ the standard orthonormal basis of dimension $n+1$, such that $Y^{[\by]} \be_i$ is the $i$-th column of the moment matrix.

\begin{definition}[The \ls\ (\iLS) and \ls\ \iSDP\ (\iLSp) systems]\label{def: LS definition} 
Consider the conified polytope $K_0(P)$ defined earlier  (let us also write $K^+_0(P)=K_0(P)$). The level-$t$ \ls\ cone (relaxation or tightening) $K_t(P)$ (resp.\ $K^+_t(P)$) of $\LS$ (resp.\ $\LS_+$) is recursively defined as all $n+1$ dimensional vectors $(x_0, x_0\bx)$ for which there exist $\by \in \powerset_2$ such that $Y^{[\by]} \be_i, Y^{[\by]} \left( \be_0 - \be_i\right) \in K_{t-1}(P)$ (resp.\ $K^+_{t-1}(P)$) and $(x_0,x_0\bx) = Y^{[\by]} \be_0$. The level-$t$ \ls\ \iSDP\ tightening of $\LS_+$ asks in addition that $Y^{[\by]}$ is a positive-semidefinite matrix.
\end{definition}

In the original work of Lov\'asz and Schrijver~\cite{LS91} it is shown that the cone $K_n(P)$ (even in the \iLS\ system) projected on $x_0=1$ is exactly the integral hull of the original \iLP\ relaxation, while one can optimize over $K_t(P)$ in time $n^{O(t)}$, given that the original relaxation admits a (weak) polytime separation oracle. 
The algorithm in this section, as well as the one in Section~\ref{sec:knapsack}, both rely heavily on the following facts, which follow easily from the above definition:
\begin{fact}\label{fact: ls conditioning} For any vector $\bx\in[0,1]^n$ such that $(1,\bx)\in K_t(P)$, and corresponding moment vector $\by\in\powerset_2$, and for any $i\in[n]$ such that $x_i>0$, the rescaled column vector $\frac1{x_i}Y^{[\by]} \be_i$ is in $K_{t-1}(P)\cap\{(1,\bx')\mid\bx'\in[0,1]^n\}$.
\end{fact}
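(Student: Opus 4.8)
The plan is to unwind Definition~\ref{def: LS definition} directly; the statement is essentially the ``conditioning'' operation built into the $\LS$ recursion, and the only things to verify are an indexing bookkeeping step and the fact that the cones are closed under multiplication by positive scalars.

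First I would apply the definition of $K_t(P)$ to the point $(x_0,x_0\bx)=(1,\bx)$. This produces a moment vector $\by\in\powerset_2$ with $(1,\bx)=Y^{[\by]}\be_0$ and, for the index $i$ in the statement, $Y^{[\by]}\be_i\in K_{t-1}(P)$ (this is one half of the defining requirement; the other half, $Y^{[\by]}(\be_0-\be_i)\in K_{t-1}(P)$, is not needed here). Thus the \emph{unscaled} column $Y^{[\by]}\be_i$ already lies in the level-$(t-1)$ cone, and it remains only to divide by $x_i$ and check that this lands us on the affine slice $x_0=1$ inside $[0,1]^n$.

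Second I would pin down the zeroth coordinate of $Y^{[\by]}\be_i$. Since the rows and columns of $Y^{[\by]}$ (for $\by\in\powerset_2$) are indexed by subsets of $[n]$ of size at most one, the row-$\emptyset$ entry of $Y^{[\by]}\be_i$ is $y_{\emptyset\cup\{i\}}=y_{\{i\}}$. On the other hand, the $i$-th coordinate of $Y^{[\by]}\be_0=(1,\bx)$ is the row-$\{i\}$ entry $y_{\{i\}\cup\emptyset}=y_{\{i\}}$, which equals $x_i$. Hence the zeroth coordinate of $Y^{[\by]}\be_i$ equals $x_i$, and by hypothesis this is strictly positive, so dividing by $x_i$ is legitimate.

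Finally I would invoke conic closure: each $K_t(P)$ is a cone (if $(x_0,x_0\bx)\in K_t(P)$ is witnessed by $\by$, then $\lambda(x_0,x_0\bx)$ is witnessed by $\lambda\by$ for any $\lambda>0$, using $Y^{[\lambda\by]}=\lambda Y^{[\by]}$ and induction on $t$ with base case $K_0(P)$). Therefore $\frac1{x_i}Y^{[\by]}\be_i\in K_{t-1}(P)$, and now its zeroth coordinate is $1$. Since $K_{t-1}(P)\subseteq K_0(P)=\{(x_0,x_0\bx')\mid x_0\ge 0,\ \bx'\in P\}$ and $P\subseteq[0,1]^n$, any member of $K_{t-1}(P)$ with zeroth coordinate $1$ has the form $(1,\bx')$ with $\bx'\in P\subseteq[0,1]^n$. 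This is exactly the assertion $\frac1{x_i}Y^{[\by]}\be_i\in K_{t-1}(P)\cap\{(1,\bx')\mid\bx'\in[0,1]^n\}$. The only ``obstacle'' is minor: being careful that the moment-matrix indexing conventions really do make the zeroth entry of the $i$-th column equal to $x_i$; once that is settled, everything else is immediate from the recursive definition and the conic structure.
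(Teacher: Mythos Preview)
Your argument is correct and is precisely the unwinding of Definition~\ref{def: LS definition} that the paper has in mind; the paper itself offers no explicit proof, stating only that the fact ``follows easily from the above definition.'' Your bookkeeping on the zeroth coordinate of $Y^{[\by]}\be_i$ and the appeal to conic closure fill in exactly the details one would supply.
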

\begin{fact}\label{fact: ls integrality} For any vector $\bx\in[0,1]^n$ such that $(1,\bx)\in K_t(P)$, and  any coordinate $j$ such that $x_j$ in integral, for all $t'<t$, any vector $\bx'$ such that $(1,\bx')\in K_{t'}(P)$ derived from $\bx$ by one or more steps as in Fact~\ref{fact: ls conditioning}, we have $x_j'=x_j$.
\end{fact}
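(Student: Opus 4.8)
The plan is an induction on the number of conditioning steps, which reduces everything to understanding a \emph{single} application of Fact~\ref{fact: ls conditioning}. So first I would fix $(1,\bx)\in K_t(P)$ (with $t\ge 1$) together with a witnessing moment vector $\by\in\powerset_2$, fix a coordinate $j$ with $x_j\in\{0,1\}$ and an index $i\in[n]$ with $x_i>0$, and set $\bx'=\frac1{x_i}Y^{[\by]}\be_i$. Unwinding the indexing conventions of the moment matrix (rows and columns indexed by $\emptyset$ and the singletons, with $\be_0\leftrightarrow\emptyset$, $\be_k\leftrightarrow\{k\}$, and entry $Y^{[\by]}_{A,B}=y_{A\cup B}$), one reads off $y_\emptyset=x_0=1$, $y_{\{k\}}=x_k$ for every $k$, and that the $j$-th coordinate of $\bx'$ equals $y_{\{i,j\}}/x_i$ (the case $i=j$ being trivial, as then $x_j=1$ forces this to be $y_{\{j\}}/x_j=1$). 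So the whole claim comes down to showing $y_{\{i,j\}}=0$ when $x_j=0$, and $y_{\{i,j\}}=x_i$ when $x_j=1$.

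The key point is a one-line structural fact about $K_0(P)$: since $K_0(P)=\{(x_0,x_0\bx)\mid x_0\ge0,\ \bx\in P\}$, any vector of $K_0(P)$ whose homogenizing ($x_0$-) coordinate equals $0$ is necessarily the zero vector (we may assume $P\ne\emptyset$, else the statement is vacuous). By Definition~\ref{def: LS definition}, both ``branch'' vectors $Y^{[\by]}\be_j$ and $Y^{[\by]}(\be_0-\be_j)$ lie in $K_{t-1}(P)\subseteq K_0(P)$; their $x_0$-coordinates (i.e.\ $\emptyset$-entries) are $y_{\{j\}}=x_j$ and $y_\emptyset-y_{\{j\}}=1-x_j$ respectively. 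When $x_j\in\{0,1\}$ exactly one of these is $0$, so the corresponding branch vector vanishes identically: if $x_j=0$ then $Y^{[\by]}\be_j=\bo$, hence in particular $y_{\{i,j\}}=0$; if $x_j=1$ then $Y^{[\by]}(\be_0-\be_j)=\bo$, i.e.\ $Y^{[\by]}\be_0=Y^{[\by]}\be_j$, and comparing $\{i\}$-entries gives $y_{\{i,j\}}=y_{\{i\}}=x_i$. In both cases $x_j'=x_j$. (Alternatively one can argue this inequality-style from $P\subseteq[0,1]^n$, bounding $y_{\{i,j\}}$ between $0$ and $x_j$ from $Y^{[\by]}\be_j\in K_0(P)$, and $y_{\{i\}}-y_{\{i,j\}}$ between $0$ and $1-x_j$ from $Y^{[\by]}(\be_0-\be_j)\in K_0(P)$; I would keep whichever phrasing is cleaner.)

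Finally I would run the induction: write the derivation of $\bx'$ as a chain $\bx=\bx^{(0)}\to\bx^{(1)}\to\dots\to\bx^{(r)}=\bx'$ with $(1,\bx^{(\ell)})\in K_{t-\ell}(P)$, each step being of the form in Fact~\ref{fact: ls conditioning}. Since $x_j$ is integral at the start, the single-step analysis above shows it remains equal to its original value after every step (and Fact~\ref{fact: ls conditioning} also guarantees $\bx^{(\ell)}\in[0,1]^n$ and level $t-\ell\ge 1$ whenever $\ell<r$, so each step is legitimate), hence $x_j'=x_j$. I do not anticipate a genuine obstacle; the only thing requiring care is the bookkeeping of the moment-matrix indexing and verifying that ``$x_j$ integral'' propagates down the chain so that the single-step argument keeps applying — the entire mathematical content is the observation that a conified point with vanishing homogenizing coordinate must be the zero vector.
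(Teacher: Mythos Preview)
Your proposal is correct and matches the paper's approach. The paper does not spell out a proof of Fact~\ref{fact: ls integrality} (it merely says the fact ``follows easily from the above definition''), but the key observation you use --- that a vector in the conified polytope with vanishing homogenizing coordinate must be the zero vector --- is precisely the argument the paper invokes later when justifying the closely related Fact~\ref{fact:LS-for-x_i=1}; your treatment of both the $x_j=0$ and $x_j=1$ cases, together with the induction on the number of conditioning steps, fills in exactly the details the paper leaves implicit.
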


\section{Two Approaches for Proving Theorem~\ref{thm:approx}} \label{sec: two approaches for thm}
In the following sections, we let $(X, \mathcal S)$ denote a \setc\ instance with items $X$ and cover-sets $\mathcal S$ where each $S \in \mathcal S$ has cost $c(S)$.
We use $n$ to denote the number of items in $X$ and $m$ to denote the number of cover-sets in $\mathcal S$.

\subsection{Sketch of a Combinatorial Proof} \label{sec:combapprox sketch}

Recall that the standard greedy algorithm for approximating \setc\ iteratively selects the cover-set $S$ of minimum density $c(S)/|S \setminus \bigcup_{T \in \mathcal C} T|$ where
$\mathcal C$ is the collection of cover-sets already chosen. The approximation guarantee of this algorithm is $H_b$, where $b$ is the size of the largest cover-set. Our algorithm builds on this result simply by guessing up to $d$ cover-sets in the optimal solution before running the greedy algorithm. However, some of the cover-sets in $\mathcal S$ that were not guessed (and might still contain uncovered items) are discarded before running the greedy algorithm. Specifically, we discard the cover-sets that contain more than $\frac{n}{d}$ uncovered items after initially guessing the $d$ sets. We show that for some choice of $d$ sets, no remaining set in the optimum
solution covers more than $\frac{n}{d}$ uncovered items. Thus, running the greedy algorithm on the remaining sets  is actually an $H_{n/d}$-approximation.
The full description of the algorithm along with all details of the proof are in Appendix~\ref{sec:combapprox}

\subsection{Proof Based on the {\ls} System}
\label{sec:hierarchy}

In this section we provide an alternative {\iLP}-based approximation algorithm for \setc\ with the same performance as in Section~\ref{sec:combapprox sketch}, illustrating the power of the so-called lift-and-project systems. 
Consider the standard \iLP\ relaxation 
\begin{align}
{\rm minimize} \qquad& \sum_{S \in \mathcal S} c(s) x_S & \nonumber\\
{\rm subject~to} \qquad & \sum_{S \ni i} x_S  \geq 1 & \forall~ i \in X \label{eq:setcover-bound}\\
& 0 \leq x_S  \leq 1 & \forall~ S \in \mathcal S &\label{eq: setcover box}
\end{align}
for \setc. 
Now, consider the corresponding feasibility \iLP\ where instead of explicitly minimizing the objective function, we add the following bound on the objective function as a constraint (we will later guess the optimal value $q$ by binary search):%
\begin{equation}\label{equa: bound on cost setcover}
\sum_{S \in \mathcal S} c(s) x_S \leq q.
\end{equation}
We will work with the feasibility \iLP\ consisting only of constraints \eqref{eq:setcover-bound}, \eqref{eq: setcover box}
and \eqref{equa: bound on cost setcover} (and no objective function). 
Denote the corresponding polytope of feasible solutions by $P_q$.

In what follows we strengthen polytope $P_q$ using the \ls\ lift-and-project system. 
Next we 
show that the level-$d$ \ls\ relaxation $K_d(P_q)$ can give a $H_{\frac{n}{d}}$-factor approximation algorithm. We note here that applying the \ls\ system to the feasibility $P_q$ (which includes the objective function as a constraint) and not on the standard \iLP\ relaxation of \setc\ is crucial, since by Alekhnovich et al.~\cite{alekhnovich:arora:tourlakis} the latter \iLP\ has a very bad integrality gap even when strengthened by $\Omega(n)$ rounds of $\LS_+$ (which is even stronger than $\LS$).

To that end, let $q$ be the smallest value such that the level-$d$ \iLS\ tightening of $P_q$ is not empty (note that $q\leq \OPT$). The value $q$ can be found through binary search (note that in each stage of the binary search we attempt to check $K_d(P_{q'})$ for emptyness for some $q'$, which takes time $m^{O(d)}$). Our goal is to show that for this $q$ we can find a \setc\ of cost at most $q\cdot H_{\frac nd}$.

Let $\bx^{(d)}$ be such that $(1,\bx^{(d)})\in K_d(P_q)$. For any coordinate $i$ in the support of $\bx^{(d)}$ we can invoke Fact~\ref{fact: ls conditioning} and get a vector $\bx^{(d-1)}$ such that $(1,\bx^{(d-1)})\in K_{d-1}(P_q)$ and $\bx^{(d-1)}_i=1$. 
By Fact~\ref{fact: ls integrality}, by iterating this step, we eventually obtain a vector $\bx^{(0)}\in P_q$ which is integral in at least $d$ coordinates. Note that by constraint~\eqref{equa: bound on cost setcover}, this solution has cost at most $q$. %
 We refer to this subroutine as the \textit{Conditioning Phase}, which is realized in $d$ many inductive steps. 

If at some level $0 \leq i \leq d$, the sets whose coordinates in $\bx^{(i)}$ are set to 1 cover all universe elements $X$, we have solved the \setc\ instance with cost 
$$\sum_{S: \bx^{(i)}_S=1} C(S) \leq q \leq\OPT.$$
Otherwise, we need to solve a smaller instance of \setc\ defined by all elements $Y \subseteq X$ not already covered, using cover-sets $\mathcal T = \{ S\cap Y\mid \bx^{(0)}_S >0 \}$. We introduce some structure in the resulting instance $(Y, \mathcal T)$ of \setc\ by choosing the indices we condition on as in the proof of Lemma~\ref{lem:order}.  This gives the following Lemma, whose proof is similar to that of Lemma~\ref{lem:order}.
\begin{lemma}\label{lemma: small cover-sets in subinstance}
If at each step of the Conditioning Phase we choose the set $S$ in the support of the current solution $\bx^{(d')}$
containing the most uncovered elements in $X$, then for all $T \in \mathcal T$ we have $|T| \leq \frac{n}{d}$.
\end{lemma}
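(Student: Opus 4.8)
The plan is to run essentially the greedy set-cover counting argument behind Lemma~\ref{lem:order}, now applied to the sequence of cover-sets produced by the Conditioning Phase. First I would fix notation: let $S_1,\dots,S_d$ be the cover-sets whose coordinates get rounded to $1$, listed in the order the conditioning steps are performed, and for $1\le j\le d+1$ put $U_j:=X\setminus(S_1\cup\cdots\cup S_{j-1})$, so that $U_1=X$ and $U_{d+1}=Y$ (the set of elements still uncovered when the residual instance $(Y,\mathcal T)$ is formed). Let $a_j:=|S_j\cap U_j|$ be the number of elements first covered at step $j$. Since the sets $S_j\cap U_j$ are pairwise disjoint subsets of $X$, we have $\sum_{j=1}^d a_j = |X|-|Y| \le n$. (If $Y=\emptyset$ the lemma is vacuous; otherwise every $U_j$ is nonempty, and then, by constraint~\eqref{eq:setcover-bound} applied to the current iterate, the support always contains a cover-set meeting $U_j$, so each greedy step is well defined and $a_j\ge 1$.)

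The step I expect to be the crux is verifying that a cover-set $S$ surviving in the support all the way down to $\bx^{(0)}$ was a legitimate candidate at \emph{every} earlier greedy step; this is what lets us compare $a_j$ against $|S\cap U_j|$. This should follow from Fact~\ref{fact: ls integrality}: a coordinate equal to $0$ is integral, hence stays $0$ under all subsequent conditioning steps, so $\mathrm{supp}(\bx^{(0)})\subseteq\mathrm{supp}(\bx^{(d')})$ for every iterate $\bx^{(d')}$ along the phase. Consequently, for each $j$ and each $S$ with $\bx^{(0)}_S>0$, the set $S$ lies in the support of the iterate at step $j$, so the greedy choice of $S_j$ (the support set maximizing $|\,\cdot\,\cap U_j|$) yields $a_j = |S_j\cap U_j| \ge |S\cap U_j| \ge |S\cap Y|$, the last inequality because $Y=U_{d+1}\subseteq U_j$.

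To conclude: any $T\in\mathcal T$ has the form $T=S\cap Y$ with $\bx^{(0)}_S>0$, so $|T|\le a_j$ for all $1\le j\le d$, and summing over $j$ gives $d\,|T|\le\sum_{j=1}^d a_j\le n$, i.e.\ $|T|\le n/d$. (Equivalently, as in Lemma~\ref{lem:order}, one can first note that the $a_j$ are non-increasing — $S_{j+1}$ is already a candidate at step $j$, so $a_j\ge|S_{j+1}\cap U_j|\ge|S_{j+1}\cap U_{j+1}|=a_{j+1}$ — whence $a_d\le\frac1d\sum_{j}a_j\le n/d$ and $|T|\le a_d$.) Everything here is routine except for the support-monotonicity point above, which is the one place I would take care to invoke Fact~\ref{fact: ls integrality} correctly.
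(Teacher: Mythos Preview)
Your argument is correct and follows essentially the same greedy counting as the paper's proof: define the incremental coverages $a_j$, use the greedy rule to bound $|S\cap U_j|\le a_j$ for every surviving $S$, and combine with $\sum_j a_j\le n$. If anything, your write-up is slightly more careful than the paper's, since you explicitly invoke Fact~\ref{fact: ls integrality} to get $\mathrm{supp}(\bx^{(0)})\subseteq\mathrm{supp}(\bx^{(d')})$ for every intermediate iterate---a step the paper uses implicitly when it passes from bounds on sets in $\mathrm{supp}(\bx^{(1)})$ to sets in $\mathrm{supp}(\bx^{(0)})$---and your direct inequality $d\,|T|\le\sum_j a_j$ avoids the detour through monotonicity of the $a_j$.
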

\begin{proof}

For $1 \leq i \leq d$ let $S_i$ denote the cover-set chosen in the Conditioning Phase for level $i$ and for $0 \leq i \leq d$ let $\mathcal C_i = \{S_d, S_{d-1}, \ldots, S_{i+1}\}$
(with $\mathcal C_d = \emptyset$).
We show that at every iteration $1 \leq i \leq d$ we have $|S \setminus \bigcup_{T \in \mathcal C_i} T| \leq \frac{n}{d-i+1}$ for every $S \in \mathcal S \setminus \mathcal C_i$
with $\bx^{(i)}_S > 0$.

For $1 \leq i \leq d$, let $\alpha_i := |S_i \setminus \bigcup_{T \in \mathcal C_i} T|$.
Since we chose the largest (with respect to the uncovered items) cover-set $S_i$ in the support of $\bx^{(i)}$ we have $|S' \setminus \bigcup_{T \in \mathcal C_i} T| \leq \alpha_i$
for every cover-set $S' \in \mathcal S \setminus \mathcal C_i$ with $\bx^{(i)}_{S'} > 0$. For $2 \leq i \leq d$ we also have
$\alpha_{i-1} \leq \alpha_i$ because $\mathcal C_i \subseteq \mathcal C_{i-1}$ and because we chose $S_i$ instead of $S_{i-1}$ in the Conditioning Phase for level $i$.

So, $\alpha_d \geq \alpha_{d-1} \geq \ldots \geq \alpha_1$. Now, each item $j$ covered by $\mathcal C_0$ contributes 1 to $\alpha_i$ for the earliest index $i$ for which $j \in S_i$,
so $\sum_{i=1}^d \alpha_i \leq n$.
This implies that $\alpha_i \leq \frac{n}{d-i+1}$ for each $1 \leq i \leq d$. Therefore, every set in the support of $\bx^{(0)}$ has at most
$\frac{n}{d}$ elements that are not already covered by $\mathcal C_0$. So, the instance $(Y, \mathcal T)$ has $|T| \leq \frac{n}{d}$ for any $T \in \mathcal T$.
\end{proof}

Let $\mathcal D$ be the collection of cover-sets chosen as in Lemma~\ref{lemma: small cover-sets in subinstance}. Observe that the vector $\bx^{(0)}$ projected on the cover-sets $\mathcal S \setminus \mathcal D$ that were \textit{not} chosen in the Conditioning Phase is feasible for the \iLP\ relaxation of the instance $(Y, \mathcal T)$. In particular, the cost of the \iLP\ is at most $q - \sum_{S \in \mathcal D} c(S)$, and by Lemma~\ref{lemma: small cover-sets in subinstance} all cover-sets have size at most $\frac{n}{d}$. By Theorem~\ref{thm:greedy}, the greedy algorithm will find a solution for $(Y, \mathcal T)$ of cost at most $H_\frac{n}{d} \cdot \left( q - \sum_{S \in \mathcal D} c(S) \right)$. Altogether, this gives a feasible solution for $(X, \mathcal S)$ of cost 
$$ H_\frac{n}{d} \cdot \bigg( q - \sum_{S \in \mathcal D} c(S) \bigg) 
 + \sum_{S \in \mathcal D} c(S) \leq H_\frac{n}{d} \cdot q \leq H_\frac{n}{d} \cdot \OPT.$$


\section{Linear \sa\ Integrality Gap for \setc} \label{sec:sa_lower}

The level-$\ell$ \sa\ relaxation is a tightened \iLP\ that can be derived systematically starting with any 0-1 \iLP\ relaxation. While in this work we are interested in tightening the \setc\ polytope, the process we describe below is applicable to any other relaxation. 

\begin{definition}[The \sa\ system]\label{def: SA definition}
Consider a polytope over the variables $y_1,\ldots, y_n$ defined by finitely many constraints (including the box-constraints $0\leq y_i \leq 1$). The level-$\ell$ \sa\ relaxation is an \iLP\ over the variables $\{y_A\}$ where $A$ is any subset of $\{1,2,\ldots,n\}$ of size at most $\ell+1$, and where $y_\emptyset =1$. For every constraint $\sum_{i=1}^n a_i y_i \geq b$ of the original polytope and for every disjoint $P,E \subseteq \{1,\ldots, n\}$ with $|P|+|E|\leq \ell$, the following is a constraint of the level-$\ell$ \sa\ relaxation
$$
\sum_{i=1}^n a_i   \sum_{\emptyset \subseteq T \subseteq E} (-1)^{|T|} y_{P \cup T \cup \{i\} }
 \geq b \sum_{\emptyset \subseteq T \subseteq E} (-1)^{|T|} y_{P \cup T }.
$$
\end{definition}

%
We will prove Theorem~\ref{thm: SA linear tight integrality gap} in this section. For this we will need 
 two ingredients: (a) appropriate instances, and (b) a solution of the \sa\ \iLP\ as described in Definition~\ref{def: SA definition}. Our hard instances are described in the following lemma, which is due to Alekhnovich et al.~\cite{alekhnovich:arora:tourlakis}.
\begin{lemma}[\setc\ instances with no small feasible solutions]~
\label{lem: hard SA instances}\\
For every $\eps> \eta>0$, and for all sufficiently large $n$, there exist \setc\ instances over a universe of $n$ elements and $n$ cover-sets, such that:\\
(i) Every element of the universe appears in exactly $(\eps-\eta)n$ cover-sets, and\\
(ii) There is no feasible solution that uses less than $\log_{1+\eps} n$ cover-sets.
\end{lemma}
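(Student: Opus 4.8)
The plan is to prove Lemma~\ref{lem: hard SA instances} by a probabilistic construction: choose each of the $n$ cover-sets to be a random subset of the universe, with membership of each element in each cover-set included independently with an appropriate probability $p$, and then show that with positive probability both properties hold (possibly after a slight perturbation to make (i) exact). Setting $p$ so that the expected number of cover-sets containing a fixed element is roughly $(\eps-\eta)n$, i.e.\ $p \approx (\eps-\eta)$, we handle property (i) either by conditioning on the degrees (e.g.\ generating the incidence structure as a uniformly random bipartite graph with all element-degrees equal to $(\eps-\eta)n$) or by a two-step argument: first argue concentration of degrees via Chernoff, then delete/add a negligible number of incidences to equalize them without destroying property (ii). I expect (i) to be a routine matter once the right random model is fixed.

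The substance is property (ii): no $k := \lceil \log_{1+\eps} n \rceil - 1$ cover-sets cover the universe. First I would fix a candidate subfamily of $k$ cover-sets. The probability that a given universe element is \emph{not} covered by any of them is $(1-p)^k \approx (1-(\eps-\eta))^{\,\log_{1+\eps} n}$; I would choose $\eta$ small relative to $\eps$ so that $(1-(\eps-\eta)) \geq (1+\eps)^{-(1+\delta)}$ for a small $\delta>0$, making this probability at least $n^{-(1+\delta)}$, hence strictly larger than $1/n$. Then the probability that \emph{all} $n$ elements are covered by this fixed subfamily is at most $(1 - n^{-(1+\delta)})^n \le \exp(-n^{-\delta})$, which is $o(1)$ but not small enough to survive a union bound over all $\binom{n}{k}$ subfamilies. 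The fix is the standard trick: rather than union-bounding over subfamilies directly, bound the \emph{expected number} of bad subfamilies, or better, observe that the number of $k$-subfamilies is only $\binom{n}{k} \le n^k = 2^{O(\log^2 n / \log(1+\eps))}$, which is sub-exponential, while for covering all $n$ elements we need the failure probability per subfamily to be $\le \exp(-\Omega(n))$. To get that, I would not use $k$ arbitrary sets but rather note it suffices to rule out covering by exactly $k$ sets where $k$ is just below $\log_{1+\eps} n$, and push the per-element non-coverage probability up to roughly $n^{-1+c}$ for a constant $c>0$ by choosing the threshold slightly more conservatively (this is exactly why the lemma states $\log_{1+\eps} n$ rather than something larger); then each subfamily fails to be a cover with probability $\ge 1 - (1-n^{-1+c})^n \to 1$, and in fact is a cover with probability $\le \exp(-n^{c})$, which beats the $n^k$ union bound since $n^k = \exp(O(\log^2 n))$.

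Carrying this out, the key steps in order are: (1) fix the random model (uniform random bipartite incidence, or i.i.d.\ with parameter $p=\eps-\eta$) and state the target value of $k$; (2) compute, for a fixed element and a fixed $k$-subfamily, the non-coverage probability and calibrate $\eta$ and the exact value of $k$ so this probability is at least $n^{-1+c}$ for some constant $c = c(\eps,\eta)>0$; (3) conclude that a fixed $k$-subfamily covers the universe with probability at most $\exp(-\Omega(n^c))$; (4) union bound over the at most $n^k = \exp(O(\log^2 n))$ subfamilies to get that with probability $1-o(1)$ no $k$-subfamily is a cover, which is property (ii); (5) handle property (i) by working in the exact-degree model, or by a Chernoff concentration plus local repair argument, and check the repair changes no more than $o(n)$ incidences so that (ii) is preserved (a single element being removed from a set only helps (ii), and adding $o(n)$ incidences changes each non-coverage probability by a $1-o(1)$ factor); (6) combine via a union bound over the two failure events.

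The main obstacle I anticipate is the tension in step (2)--(4) between wanting the per-element non-coverage probability large (which forces $k$ small and $\eta$ small, and is what makes the per-subfamily covering probability exponentially small in a power of $n$) and wanting $k$ as large as $\log_{1+\eps} n$ (which is what the lemma claims and what the integrality gap argument needs). Getting these to coexist requires being careful that $\log_{1+\eps} n$ and the ``safe'' threshold differ only in lower-order terms, so that the $\exp(-n^{\Omega(1)})$ bound genuinely dominates the $\exp(O(\log^2 n))$ count of subfamilies; this is the delicate calibration, and it is presumably why Alekhnovich et al.\ phrase the bound with $\eps - \eta$ rather than $\eps$. Since the paper attributes this lemma to~\cite{alekhnovich:arora:tourlakis}, an acceptable alternative is simply to cite that source for the construction and quote the two properties verbatim, which is likely what the authors do here.
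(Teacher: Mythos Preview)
Your final guess is exactly right: the paper does not prove this lemma at all. It is stated as ``due to Alekhnovich et al.~\cite{alekhnovich:arora:tourlakis}'' and used as a black box; the only place it is invoked is with $\eta=\eps^2$, in the proof of Theorem~\ref{thm: SA linear tight integrality gap}. So the ``proof'' in the paper is precisely the citation you anticipated.

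Your probabilistic sketch is the right idea and is essentially how such instances are constructed, but the middle paragraph is muddled: writing that the non-coverage probability is ``at least $n^{-(1+\delta)}$, hence strictly larger than $1/n$'' has the inequality backwards, and $(1-n^{-(1+\delta)})^n\to 1$, not $o(1)$. You recover by switching to the correct target $n^{-1+c}$, which is what is actually needed so that the per-subfamily covering probability is $\exp(-n^{c})$ and beats the $n^{k}=\exp(O(\log^2 n))$ union bound. One substantive caveat you should flag is that achieving $(1-(\eps-\eta))^{k}\geq n^{-1+c}$ with $k\approx \log_{1+\eps}n$ forces $1-\eps+\eta>(1+\eps)^{-1}$, i.e.\ $\eta>\eps^2/(1+\eps)$; for smaller $\eta$ the random construction as you describe it does not give (ii) at threshold $\log_{1+\eps}n$. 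This is consistent with the paper's use of the lemma (it sets $\eta=\eps^2>\eps^2/(1+\eps)$), but it means your argument as written does not cover the full range ``for every $\eps>\eta>0$'' claimed in the statement; for that one has to appeal to the original construction in~\cite{alekhnovich:arora:tourlakis}.
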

In order to prove Theorem~\ref{thm: SA linear tight integrality gap} we will invoke Lemma~\ref{lem: hard SA instances} with appropriate parameters. Then we will define a vector solution for the level-$\ell$ \sa\ relaxation as described.

\begin{lemma}\label{lem: sa feasible solution}
Consider a \setc\ instance on $n$ cover-sets as described in Lemma~\ref{lem: hard SA instances}. Let $f$ denote the number of cover-sets covering every element of the universe. For $f\geq 3 \ell$, the vector $\by$ indexed by subsets of $\{1,\ldots,n\}$ of size at most $\ell+1$ defined as 
$ y_A := \frac{(f-\ell-1)!}{(f-\ell-1+|A|)!}, ~\forall A \subseteq \{1,\ldots,n\}, ~|A|\leq \ell+1$,
satisfies the level-$\ell$ \sa\ \iLP\ relaxation of the \setc\ polytope. 
\end{lemma}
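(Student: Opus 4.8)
## Proof Plan for Lemma~\ref{lem: sa feasible solution}

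The plan is to verify, one family of inequalities at a time, that $\by$ satisfies every constraint of the level-$\ell$ \sa\ relaxation in Definition~\ref{def: SA definition}. Throughout set $c:=f-\ell-1$, so the hypothesis $f\ge 3\ell$ gives $c\ge 2\ell-1$, and observe that $y_A$ depends only on $|A|$: writing $g(k):=\frac{c!}{(c+k)!}$ we have $y_A=g(|A|)$, $y_\emptyset=g(0)=1$, all $y_A\in[0,1]$, and the one-step recursion $g(k)=(c+k+1)\,g(k+1)$.

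The engine of the argument is a collapse of the alternating sums that occur in the \sa\ constraints. For disjoint $P,E\subseteq[n]$ with $p:=|P|$, $e:=|E|$, put
$$h(p,e)\ :=\ \sum_{\emptyset\subseteq T\subseteq E}(-1)^{|T|}y_{P\cup T}\ =\ \sum_{t=0}^{e}\binom{e}{t}(-1)^{t}g(p+t),$$
which depends only on $p$ and $e$ since $|P\cup T|=p+|T|$. Splitting the subsets $T\subseteq E$ according to whether they contain a fixed index $j$ shows that $\sum_{T\subseteq E}(-1)^{|T|}y_{P\cup T\cup\{j\}}$ equals $h(p,e)$ when $j\in P$, equals $0$ when $j\in E$, and equals $h(p+1,e)$ when $j\notin P\cup E$. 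Substituting this into Definition~\ref{def: SA definition}: the lift of $x_S\ge0$ reduces to one of $h(p,e)\ge0$, $h(p+1,e)\ge0$, $0\ge0$; the lift of $x_S\le1$ reduces to one of $h(p,e)-h(p+1,e)\ge0$, $h(p,e)\ge0$, $0\ge0$; and, writing $P_i$, $E_i$ for the number of cover-sets through element $i$ lying in $P$, $E$ respectively, the lift of $\sum_{S\ni i}x_S\ge1$ becomes
$$(P_i-1)\,h(p,e)+(f-P_i-E_i)\,h(p+1,e)\ \ge\ 0.$$
A routine application of Pascal's rule gives $h(p,e)=h(p+1,e)+h(p,e+1)$, and so all of these inequalities follow once we prove, for all $p\ge0$, $e\ge0$ with $p+e\le\ell+1$,
$$\text{(I)}\quad h(p,e)>0,\qquad\qquad\text{(II)}\quad h(p,e)\le(c+p+1)\,h(p+1,e).$$
Indeed, (I) takes care of both box families (for $x_S\le1$ use $h(p,e)-h(p+1,e)=h(p,e+1)$); and in the covering family, if $P_i\ge1$ both summands are nonnegative, whereas if $P_i=0$ then $f-E_i\ge f-e\ge c+p+1$ (as $c=f-\ell-1$ and $p+e\le\ell$), so (II) gives $(f-E_i)\,h(p+1,e)\ge(c+p+1)\,h(p+1,e)\ge h(p,e)$.

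To prove (I) and (II) I would factor out $g(p)$: since $g(p+t)/g(p)=\prod_{j=0}^{t-1}(c+p+1+j)^{-1}$, writing $w:=c+p+1$ gives $h(p,e)=g(p)\cdot F(e,w)$ where
$$F(e,w)\ :=\ \sum_{t=0}^{e}(-1)^{t}\binom{e}{t}\prod_{j=0}^{t-1}\frac{1}{w+j}.$$
As $g(p)>0$ and $g(p)/g(p+1)=w$, statement (I) is equivalent to $F(e,w)>0$ and statement (II) to $F(e,w)\le F(e,w+1)$, i.e.\ to $F(e,\cdot)$ being nondecreasing. For positivity, the positive quantities $a_t:=\binom{e}{t}\prod_{j<t}(w+j)^{-1}$ satisfy $a_{t+1}/a_t=\frac{e-t}{(t+1)(w+t)}\le\frac{e}{w}$; in every case where positivity is invoked one has $w\ge f-\ell\ge2\ell\ge e$ (the single degenerate case $\ell=1$, $f=3\ell$ being checked by hand), so $(a_t)_t$ is strictly decreasing and pairing consecutive terms yields $F(e,w)\ge a_0-a_1=1-e/w>0$. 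For monotonicity, a short telescoping computation gives the recursion
$$F(e,w+1)-F(e,w)\ =\ \frac{e}{w(w+1)}\,F(e-1,w+2),$$
whose right-hand side is positive by the positivity just established; hence $F(e,\cdot)$ is nondecreasing, which is (II).

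I expect the main obstacle to be the combinatorial unwinding in the second paragraph — carefully tracking how the ``in $P$ / in $E$ / outside $P\cup E$'' trichotomy collapses the inclusion--exclusion sums for a vector that is constant on sets of a given size — together with the positivity claim (I), which is the one place the hypothesis $f\ge3\ell$ genuinely enters: it is exactly what makes $w=c+p+1$ large enough relative to $e\le\ell+1$ for the alternating series for $F(e,w)$ to be controlled by its leading term.
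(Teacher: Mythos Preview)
Your argument is correct and in fact cleaner than the paper's. Both approaches share the same opening moves: exploit that $y_A$ depends only on $|A|$, collapse the \sa\ constraints to expressions in $(p,e)$, and prove positivity of the alternating sum by pairing consecutive terms. The divergence comes after that. The paper treats the box constraints as the twin inequalities $0\le H_{e,p}\le 1$ (with $H_{e,p}$ playing the role of your $h(p,e)$) and devotes a separate technical step (Claim~\ref{cla: upper bound binom}) to the upper bound; you instead observe the finite-difference identity $h(p,e)=h(p+1,e)+h(p,e+1)$, which reduces the lifted $x_S\le1$ constraint to another instance of positivity and makes the paper's upper-bound argument unnecessary. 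For the cover constraints, the paper reduces to $(f-e)H_{e,p+1}\ge H_{e,p}$ and proves it by a second direct pairing computation (Lemma~\ref{lem: abstract cover constraints}); you reduce one step further (using $f-e\ge c+p+1$ when $p+e\le\ell$) to $(c+p+1)h(p+1,e)\ge h(p,e)$, recast this as the monotonicity $F(e,w)\le F(e,w+1)$, and dispatch it via the one-line recursion $F(e,w+1)-F(e,w)=\frac{e}{w(w+1)}F(e-1,w+2)$ together with positivity. So your route handles both constraint families with a single positivity claim plus one algebraic identity, whereas the paper carries out two independent pairing calculations. One small presentational point: (I) and (II) should be ranged over $p+e\le\ell+1$ and $p+e\le\ell$ respectively (so that $h(p+1,e)$ is even defined in (II), and so that your use of $p+e\le\ell$ in the cover-constraint reduction matches); this does not affect correctness, since your monotonicity step only invokes positivity at level $(p+2)+(e-1)\le\ell+1$, which is within the range of (I).
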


The proof of Lemma~\ref{lem: sa feasible solution} involves a number of extensive calculations  which we give in Section~\ref{sec: proof of lemma feasible sa solution}. Assuming the lemma, 
 we are ready to prove Theorem~\ref{thm: SA linear tight integrality gap}. 

\begin{proof}[Proof of Theorem~\ref{thm: SA linear tight integrality gap}]
Fix $\eps>0$ and invoke Lemma~\ref{lem: hard SA instances} with $\eta = \eps^2$ to obtain a \setc\ instance on $n$ universe elements and $n$ cover-sets for which (i) every universe element is covered by exactly $(\eps-\eps^2)n$ cover-sets, and (ii) no feasible solution exists of cost less than $\log_{1+\eps} n$. Note that in particular (i) implies that in the \setc\ \iLP\ relaxation, every constraint has support exactly $f = (\eps-\eps^2)n$. 

Set $\ell = \frac{\gamma(\eps-\eps^2)}{1+\gamma} n$ and note that $f/\ell \geq 3$, since $\gamma \leq \frac{1}{2}$. This means we can define a feasible level-$\ell$ \sa\ solution as described in Lemma~\ref{lem: sa feasible solution}. The values of the singleton variables are set to 
$$ y_{\{i\}} = \frac{1}{(\eps-\eps^2)n-\ell} = \frac{1+\gamma}{(\eps-\eps^2)n}.$$
But then, the integrality gap is at least
$$ 	\frac{\OPT}{\sum_{i=1}^n y_{\{i\}}}
	\geq 
	\frac{\eps-\eps^2}{1+\gamma}\cdot{ \log_{1+\eps} n }
	=
	\frac{ \eps-\eps^2  }{(1+\gamma) \ln(1+\eps)} \ln n.
	$$
The lemma follows once we observe that $\ln(1+\eps) = \eps-\frac{1}{2}\eps^2 + \Theta(\eps^3)$.
\end{proof}

\subsection{Proof of Lemma~\ref{lem: sa feasible solution}}\label{sec: proof of lemma feasible sa solution}

Recall that $f$ denotes the support of every cover constraint in the \setc\ relaxation (i.e.\ the number of cover-sets every element belongs to in the instance described in Lemma~\ref{lem: hard SA instances}). Also recall that
$$ y_A := \frac{(f-\ell-1)!}{(f-\ell-1+|A|)!}, ~\forall A \subseteq \{1,\ldots,n\}, ~|A|\leq \ell+1.$$
To show feasibility of the above vectors, we need to study two types of constraints, i.e. the so called box-constraints
\begin{equation}\label{eqn: explicit box-constraints}
0 \leq  \sum_{\emptyset \subseteq T \subseteq E} (-1)^{|T|} y_{P \cup T } \leq 1, ~\forall P,E\subseteq \{1,\ldots,n\},~ |P|+|E|\leq \ell+1, 
\end{equation}
as well as the covering constraints
\begin{equation}\label{eqn: explicit cover-constraints}
\sum_{i \in D}   \sum_{\emptyset \subseteq T \subseteq E} (-1)^{|T|} y_{P \cup T \cup \{i\} }
 \geq \sum_{\emptyset \subseteq T \subseteq E} (-1)^{|T|} y_{P \cup T },~\forall P,E\subseteq \{1,\ldots,n\},~ |P|+|E|\leq \ell,
\end{equation}
where $D\subseteq \{1,\ldots,n\}$ is a set of $f$ many cover-sets covering some element of the universe. The symmetry of the proposed solutions allows us to significantly simplify the above expressions, by noting that for $|P|=p$ and $|E|=e$ we have 
$$\sum_{\emptyset \subseteq T \subseteq E} (-1)^{|T|} y_{P \cup T } 
	= 
	\sum_{t=0}^e (-1)^{t} \binom{e}{t} \frac{(f-\ell-1)!}{(f-\ell-1+p+t)!}
$$

For the sake of exposition, we show that our \iLP\ solution satisfies the two different kinds of constraints 
 in two different lemmata. Set $x = f- \ell-1+p$, and note that if $f\geq 3\ell$, then since $e\leq \ell$ we have $x \geq \frac{5}{3} e$. Thus, the feasibility of the box constraints~\eqref{eqn: explicit box-constraints} (for our solution) 
  is implied by the following combinatorial lemma. 

\begin{lemma}\label{lem: abstract box constraints}
For $x\geq \frac{5}{3} e$ we have 
$$ 0 \leq \sum_{t=0}^e (-1)^t \binom{e}{t} \frac{1}{(x+t)!} \leq \frac{1}{(x-p)!} $$
\end{lemma}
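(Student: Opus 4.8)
\textbf{Proof plan for Lemma~\ref{lem: abstract box constraints}.}

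The plan is to study the finite difference operator acting on the function $g(x) = 1/x!$. Observe that $\sum_{t=0}^e (-1)^t \binom{e}{t} \frac{1}{(x+t)!}$ is exactly $(-1)^e$ times the $e$-th forward difference $\Delta^e g(x)$, where $\Delta h(x) = h(x+1)-h(x)$. So the first step is to find a clean closed form for $\Delta g$. Writing $\frac{1}{(x+1)!} - \frac{1}{x!} = \frac{1-(x+1)}{(x+1)!} = \frac{-x}{(x+1)!}$, we see $\Delta g(x) = -x/(x+1)!$. Iterating, I expect $\Delta^e g(x)$ to have the form $(-1)^e P_e(x)/(x+e)!$ where $P_e$ is a polynomial of degree $e$ with nonnegative coefficients when expanded appropriately (in fact a product of the form relating to rising/falling factorials), so that the sign of the whole expression is $(-1)^e \cdot (-1)^e = +1$, giving the lower bound $\ge 0$. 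The cleanest route is probably induction on $e$: assume $\sum_{t=0}^{e-1}(-1)^t\binom{e-1}{t}\frac{1}{(x+t)!}$ equals some explicit nonnegative quantity $Q_{e-1}(x)$, use the Pascal-triangle identity $\binom{e}{t} = \binom{e-1}{t} + \binom{e-1}{t-1}$ to write the level-$e$ sum as $Q_{e-1}(x) - Q_{e-1}(x+1)$, and then show this difference is both nonnegative (monotonicity of $Q_{e-1}$) and bounded above by $1/(x-p)!$.

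For the lower bound, the key fact I would isolate is that $Q_{e-1}(x) = \sum_{t}(-1)^t\binom{e-1}{t}/(x+t)!$ is a positive, decreasing function of $x$ on the relevant range $x \ge \tfrac53 e$; then $Q_e(x) = Q_{e-1}(x) - Q_{e-1}(x+1) \ge 0$. Positivity and monotonicity should themselves follow from the same inductive description, so really the induction hypothesis needs to carry three bundled claims: (a) $Q_e(x) \ge 0$, (b) $Q_e$ is nonincreasing in $x$, and (c) the explicit upper bound $Q_e(x) \le 1/(x-p)!$ — wait, note $p$ only enters the upper bound and is essentially free since $1/(x-p)! \ge 1/x!$ is the weakest case only when $p \ge 0$; actually the binding instance is $p=0$ is \emph{not} binding, rather large $p$ makes the bound weaker, so it suffices to prove $Q_e(x) \le 1/x! = g(x)$, which is the $p=0$ case, and then $1/(x-p)! \ge 1/x!$ for $p\ge 0$ finishes it. So the upper bound reduces to $Q_e(x) \le g(x)$, i.e. that peeling off alternating terms never overshoots the leading term — a standard alternating-tail estimate that holds because the terms $\binom{e}{t}/(x+t)!$ are eventually decreasing in $t$ once $x$ is large enough relative to $e$ (this is where the hypothesis $x \ge \tfrac53 e$, equivalently $f \ge 3\ell$, gets used: it guarantees unimodality/decay of the summands so the alternating series brackets work).

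Concretely the steps in order: (1) rewrite the sum as $(-1)^e\Delta^e(1/x!)$ and record $\Delta(1/x!) = -x/(x+1)!$; (2) prove by induction on $e$ the bundled statement that the level-$e$ sum is nonnegative, nonincreasing in $x$, and $\le 1/x!$, using Pascal's identity to pass from $e-1$ to $e$ and the ratio test $\binom{e}{t+1}/(x+t+1)! \le \binom{e}{t}/(x+t)!$, which holds since $\frac{e-t}{x+t+1} \le 1$ whenever $x \ge e-1$, comfortably implied by $x \ge \tfrac53 e$; (3) conclude the upper bound of the lemma via $1/(x-p)! \ge 1/x!$ for $p \ge 0$. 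The main obstacle I anticipate is organizing the induction so that all three bundled properties propagate cleanly — in particular verifying that $Q_{e-1}$ being nonincreasing is strong enough to give both $Q_e \ge 0$ and, combined with $Q_{e-1} \le g$, the bound $Q_e \le g$; the alternating-series monotonicity argument must be set up carefully so that the slack $x \ge \tfrac53 e$ (rather than merely $x \ge e$) is what is actually invoked, matching the paper's $f \ge 3\ell$ hypothesis with room to spare. The arithmetic itself (the ratio test inequality, the factorial manipulations) is routine once the inductive scaffolding is fixed.
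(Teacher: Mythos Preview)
Your inductive scaffolding does not close as stated. From the bundle (a)--(c) at level $e-1$ you can indeed get (a) and (c) at level $e$ via $Q_e(x)=Q_{e-1}(x)-Q_{e-1}(x+1)$, but (b) at level $e$ asks for $Q_{e-1}(x)-2Q_{e-1}(x+1)+Q_{e-1}(x+2)\ge 0$, i.e.\ discrete \emph{convexity} of $Q_{e-1}$, which is not in your hypothesis. You anticipated this obstacle, and it is real; attempting to add convexity to the bundle just pushes the problem one order higher (you then need third differences, etc.), so the induction collapses into a demand for complete monotonicity, which you would have to prove by some other means anyway.

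Fortunately, the alternating-series argument you also sketch works directly and renders the induction unnecessary. The ratio of consecutive terms $a_t=\binom{e}{t}/(x+t)!$ is $\frac{e-t}{(t+1)(x+t+1)}$ (your stated ratio drops the harmless factor $t+1$), so $a_{t+1}\le a_t$ as soon as $e\le x+1$, which is amply implied by $x\ge\tfrac{5}{3}e$. With the $a_t$ nonincreasing and nonnegative, the standard brackets give $0\le\sum_{t=0}^e(-1)^t a_t\le a_0=1/x!\le 1/(x-p)!$ at once, and your reduction of the upper bound to the $p=0$ case is exactly right.

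This is a genuinely different route from the paper for the upper bound. The paper proves the lower bound by the same pairing of consecutive summands you describe, but for the upper bound it rewrites the sum as $\frac{1}{x!\binom{x+e}{e}}\sum_t\frac{(-1)^t}{t!}\binom{x+e}{x+t}$ and then proves a separate pairwise comparison claim on the transformed sum; that auxiliary claim is where the full strength of $x\ge\tfrac{5}{3}e$ is actually used. Your direct alternating-series bound $\sum(-1)^t a_t\le a_0$ bypasses this detour and needs only $x\ge e-1$. So the plan is salvageable and in fact simpler than the paper's --- just drop the induction and run the monotone-terms alternating-series estimate for both inequalities.
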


\begin{proof}
First we show the lower bound, for which we study two consecutive summands. We note that 
\begin{eqnarray*}
\binom{e}{2t} \frac{1}{(x+2t)!} - \binom{e}{2t+1} \frac{1}{(x+2t+1)!}
& = &	
\binom{e}{2t} \frac{1}{(x+2t)!} \left( 1 - \frac{\binom{e}{2t+1}}{\binom{e}{2t}} \frac{1}{x+2t+1}\right)  \\
& = &	
\binom{e}{2t} \frac{1}{(x+2t)!} \left( 1 - \frac{e-2t}{2t+1} \frac{1}{x+2t+1}\right)  \\
& \geq &	
\binom{e}{2t} \frac{1}{(x+2t)!} \left( 1 - \frac{e}{2t+1} \frac{1}{x+2t+1}\right)  \\
& \geq &	
\binom{e}{2t} \frac{1}{(x+2t)!} \left( 1 - \frac{e}{x} \right).
\end{eqnarray*}
Since $x\geq e$, every two consecutive summands add up to a non-negative value. Thus, if the number of summands is even (that is, if $e$ is odd), the lower bound follows, and if the number of summands is odd, the bound also follows, since for even $e$, the last summand is positive.


Now we show the upper bound. Note that, since $p$ does not appear in the sum, it suffices to bound the sum by the smaller value $\frac{1}{x!}$. This is facilitated by noting that 
$$ \sum_{t=0}^e (-1)^t \binom{e}{t} \frac{1}{(x+t)!} 
	=
  \frac{1}{x!} \sum_{t=0}^e (-1)^t \frac{1}{t!} \frac{\binom{x+e}{x+t}}{\binom{x+e}{e}}. $$
Hence, it suffices to show that 
\begin{equation}
\sum_{t=0}^e (-1)^t \frac{1}{t!} \binom{x+e}{x+t} \leq \binom{x+e}{e}. \label{equa: upper bound sum in box constraints}
\end{equation}
As before, we analyze the sum of two consecutive terms (this time in the above sum). 
 This is done in the next claim. 
\begin{claim}\label{cla: upper bound binom}
For $x \geq \frac{5}{3}e$ we have 
$$ \frac{1}{(2t)!} \binom{x+e}{x+2t} - \frac{1}{(2t+1)!} \binom{x+e}{x+2t+1}
\leq \binom{x+e}{x+2t} -  \binom{x+e}{x+2t+1}$$
\end{claim}
\begin{proof}
We divide both sides of the desired inequality by $\binom{x+e}{x+2t}$ to obtain the equivalent statement
\begin{eqnarray*}
&& \frac{1}{(2t)!} - \frac{1}{(2t+1)!} \frac{e-2t}{x+2t+1}
\leq 1 -  \frac{e-2t}{x+2t+1} \\
&\Leftrightarrow&  
	\frac{e-2t}{x+2t+1} \left(  1- \frac{1}{(2t+1)!} \right)
		\leq 
	1 - \frac{1}{(2t)!}
\end{eqnarray*}
Note that the above is tight for $t=0$. For $t>0$, and since $x \geq \frac{5}{3} e$, we have $\frac{e-2t}{x+2t+1} < \frac{3}{5}$ which is small enough to compensate for the worst-case ratio of the expressions involving factorials, which occurs for $t=1$. 
\end{proof}

Continuing our proof of Lemma~\ref{lem: abstract box constraints}, first suppose that $e$ is odd. Then Claim~\ref{cla: upper bound binom} implies that 
$$\sum_{t=0}^e (-1)^t \frac{1}{t!} \binom{x+e}{x+t} 
 \leq 
	\sum_{t=0}^e (-1)^t  \binom{x+e}{x+t} 
 =  \binom{x+e-1}{e} \leq \binom{x+e}{e}, $$
which gives the required condition~\eqref{equa: upper bound sum in box constraints} for odd $e$. If $e$ is a positive even integer, then again Claim~\ref{cla: upper bound binom} implies that 
$$\sum_{t=0}^e (-1)^t \frac{1}{t!} \binom{x+e}{x+t} 
 \leq 
	\sum_{t=0}^{e-1} (-1)^t  \binom{x+e}{x+t} + \frac{1}{e!}
 =  \binom{x+e-2}{e} + \frac{1}{e!} < \binom{x+e}{e}. $$
Finally, if $e=0$ then~\eqref{equa: upper bound sum in box constraints} holds with equality. This concludes 
 the proof.
\end{proof}

Now we turn our attention to cover constraints \eqref{eqn: explicit cover-constraints}. 
 Recall that the values $y_A$ only depend on the size of $A$. Since $f$ and $\ell$ are fixed in the context of Lemma~\ref{lem: sa feasible solution}, for $|P|=p$ and $|E|=e$ 
 we can define
$$ H_{e,p}:=\sum_{\emptyset \subseteq T \subseteq E} (-1)^{|T|} y_{P \cup T }  =   \sum_{t=0}^e (-1)^{t} \binom{e}{t} \frac{(f-\ell-1)!}{(f-\ell-1+p+t)!}.$$
Thus 
 the left-hand-side of \eqref{eqn: explicit cover-constraints} (for fixed $P$ and $E$) involves only 
 expressions of the form $H_{e,p}, H_{e,p+1}$, or 0,
depending on the relationship between the sets $P,E$ and $\{i\}$, while the right-hand-side equals $H_{e,p}$. 

More concretely, let $|D \cap P| = p_1$, $|D \cap E| = e_1$, $|P\setminus D| = p_0$ and $|E\setminus D| = e_0$, where $p_0+p_1 = |P|=p$ and $e_0+e_1 = |E|=e$, and recall that $E$ and $P$ are disjoint. Then observe that the $p_1$ 
 indices in $D\cap P$ each contribute $H_{e,p}$ to the left-hand-side of \eqref{eqn: explicit cover-constraints}. In addition, the $e_1$ indices $i\in D\cap E$ each contribute $0$ to the left-hand-side. This follows because each $T \subseteq E \setminus \{i\}$ can be paired with $T' := T \cup \{i\} \subseteq E$ and the terms $y_{P \cup T \cup \{i\}}$ and
$y_{P \cup T' \cup \{i\}}$ are identical, while they appear in the sum with opposite signs. 
Finally, the remaining $f-p_1-e_1$ indices contribute each $H_{e,p+1}$. Overall, for our proposed solution, Constraint~\eqref{eqn: explicit cover-constraints} can be rewritten as
$$
(f - p_1 - e_1 ) H_{e,p+1} + p_1 H_{e,p} \geq H_{e,p}
$$
Clearly, for $p_1>0$, the above constraint is satisfied. Hence, we may assume that $|P \cap D| = \emptyset$, and so $|P|=p_1=p$. Note also that the value $e_1$ does not affect $H_{e,p}$, thus the above inequality holds for all $e_1\leq e$ iff it holds for $e_1=e$ and $e_0=0$. 
To summarize, to show that Constraint~\eqref{eqn: explicit cover-constraints} 
 is satisfied, we need only show that 
\begin{equation}\label{eqn: explicit cover-constraints evaluated}
(f - e ) H_{e,p+1}  \geq H_{e,p}
\end{equation}
for $H_{e,p} = \sum_{t=0}^e (-1)^{t} \binom{e}{t} \frac{(f-\ell-1)!}{(f-\ell-1+p+t)!}$, and $e+p \leq \ell$. Note that the value $(f-\ell-1)!$ in the numerator appears in both sides. Also $f \geq 3 \ell$ and $e \leq \ell$ implies that $e<2\ell\leq f- \ell$. Finally recall that in~\eqref{eqn: explicit cover-constraints} we have 
$p+e =|P|+|L|\leq \ell$.  Hence, to show that~\eqref{eqn: explicit cover-constraints evaluated} (and thus Constraint~\eqref{eqn: explicit cover-constraints}) is satisfied, 
it remains only to show the following lemma. 

\begin{lemma}\label{lem: abstract cover constraints}
For $e<f-\ell$ and $p+e\leq \ell$  we have
$$ (f - e ) \sum_{t=0}^e (-1)^{t} \binom{e}{t} \frac{1}{(f-\ell+p+t)!}  - \sum_{t=0}^e (-1)^{t} \binom{e}{t} \frac{1}{(f-\ell-1+p+t)!} \geq 0$$
\end{lemma}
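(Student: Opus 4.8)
The natural move is to introduce a shorthand for the alternating sums and rewrite everything in terms of a single reference factorial. Set $x := f - \ell - 1 + p$, so that the second sum is $\sum_{t=0}^e (-1)^t \binom{e}{t} \frac{1}{(x+t)!}$ and the first is $\sum_{t=0}^e (-1)^t \binom{e}{t} \frac{1}{(x+1+t)!}$. Writing $A_x := \sum_{t=0}^e (-1)^t \binom{e}{t} \frac{1}{(x+t)!}$, the claimed inequality is $(f-e) A_{x+1} \geq A_x$. The key structural fact is the \emph{finite-difference identity}: $A_x$ is exactly the $e$-th forward difference (up to sign) of the sequence $1/x!$, so it has a clean closed form. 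Indeed, as already used in the proof of Lemma~\ref{lem: abstract box constraints}, one has $A_x = \frac{1}{x!}\sum_{t=0}^e (-1)^t \frac{1}{t!}\frac{\binom{x+e}{x+t}}{\binom{x+e}{e}}$, and more usefully $\sum_{t=0}^e (-1)^t \binom{e}{t}\frac{1}{(x+t)!} = \frac{1}{(x+e)!}\sum_{t=0}^e (-1)^t\binom{e}{t}\frac{(x+e)!}{(x+t)!} = \frac{e!}{(x+e)!}\binom{?}{?}$; the point is that $\sum_{t=0}^e (-1)^t\binom{e}{t}(x+t)(x+t+1)\cdots(x+e-1)$ is the $e$-th difference of a degree-$(e-1)$ polynomial in the ``wrong direction'', hence equals something with no dependence on the lower-order structure. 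I would aim to show the clean identity
\[
A_x \;=\; \sum_{t=0}^e (-1)^t \binom{e}{t}\frac{1}{(x+t)!} \;=\; \frac{1}{(x+e)!}\cdot\frac{(x-1)!}{(x-1)!}\cdots
\]
— more precisely I expect $A_x = \frac{\Gamma(x)}{\Gamma(x+e+1)}\cdot\frac{e!\,\Gamma(x)}{\Gamma(x)} $ type expression; the cleanest guess, consistent with the $e=0,1$ cases, is $A_x = \dfrac{(e)!\,(x-1)!}{(x+e)!\,(x-1)!}$-flavoured, i.e. $A_x$ telescopes to a ratio of the form $\prod$ over a short range. Concretely for $e=1$: $\frac{1}{x!}-\frac{1}{(x+1)!} = \frac{x}{(x+1)!}$, and for general $e$ one gets $A_x = \frac{1}{(x+e)!}\binom{x+e-1}{e}^{-1}\cdot(\text{something})$; I would verify by induction on $e$ using $A_x^{(e)} = A_x^{(e-1)} - A_{x+1}^{(e-1)}$.

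Once $A_x$ is in closed form — call it $A_x = c_e/g(x)$ where $g$ is a ratio of factorials/Gamma values — the inequality $(f-e)A_{x+1}\geq A_x$ becomes an explicit inequality between factorial ratios with no alternating sums left, and can be reduced to something like $(f-e)\cdot\frac{g(x)}{g(x+1)} \geq 1$, which after cancellation should read $(f-e)\cdot\frac{x}{x+e} \geq 1$ or a mild variant. Using $p+e\leq\ell$ and $f\geq 3\ell$ (so $x = f-\ell-1+p \geq 2\ell-1+p \geq e + (\ell - e) + \ell - 1 \geq$ something comfortably large) and $e < f-\ell$, this inequality should hold with room to spare; I would finish by plugging in the crude bounds $x \geq e$ and $f - e \geq 2$ (both consequences of the hypotheses), giving $(f-e)\frac{x}{x+e}\geq 2\cdot\frac12 = 1$.

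The main obstacle is nailing the closed form for $A_x$ and getting the induction bookkeeping right — the alternating binomial sum of reciprocal factorials does collapse, but the exact shape of the collapsed expression needs care (it is essentially $A_x = \frac{e!}{(x-1)\,x\,(x+1)\cdots(x+e)}\cdot$const, i.e. a single rising/falling-factorial term), and an off-by-one in the range of the product would propagate into the final inequality. If a clean closed form proves elusive, the fallback — paralleling the proofs of Lemma~\ref{lem: abstract box constraints} and Claim~\ref{cla: upper bound binom} — is to pair consecutive terms $t=2s, t=2s+1$ in the difference $(f-e)A_{x+1} - A_x$ and show each pair is nonnegative, using $x \geq \tfrac53 e$ (which follows from $f\geq 3\ell$, $p+e\leq\ell$) to dominate the factorial ratios exactly as in Claim~\ref{cla: upper bound binom}; this is more calculation but structurally identical to what the authors have already done, so it is guaranteed to go through.
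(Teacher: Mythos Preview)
Your primary approach does not work: the alternating sum $A_x = \sum_{t=0}^e (-1)^t\binom{e}{t}\frac{1}{(x+t)!}$ has \emph{no} closed form of the type you are hoping for. For $e=2$ one computes $A_x = \frac{x^2+x-1}{(x+2)!}$, and $x^2+x-1$ is irreducible over $\mathbb{Q}$, so no single rising/falling factorial can appear. (You may be thinking of the identity $\sum_{t=0}^e(-1)^t\binom{e}{t}\frac{1}{x+t} = \frac{e!}{x(x+1)\cdots(x+e)}$, which is for $\frac{1}{x+t}$, not $\frac{1}{(x+t)!}$.) So the whole reduction to an explicit inequality like $(f-e)\frac{x}{x+e}\geq 1$ never gets off the ground.

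Your fallback---pairing consecutive terms directly in $(f-e)A_{x+1}-A_x$---is also not ``guaranteed to go through''. Combining term-by-term gives
\[
(f-e)A_{x+1}-A_x \;=\; \sum_{t=0}^e (-1)^t\binom{e}{t}\,\frac{\ell-(p+e)-t}{(f-\ell+p+t)!},
\]
and the numerator $\ell-(p+e)-t$ \emph{changes sign} as $t$ grows past $\ell-(p+e)$. Once that happens the even-indexed terms become negative and the odd-indexed ones positive, so the pairing $(2s,2s+1)$ no longer has the right sign structure, and the crude bound $x\geq\tfrac{5}{3}e$ does not rescue it. This is not ``structurally identical'' to Claim~\ref{cla: upper bound binom}, where the coefficients never change sign.

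The paper's proof handles exactly this issue by \emph{splitting the numerator}: write $\ell-(p+e)-t$ as the constant $\ell-(p+e)\geq 0$ minus $t$. The constant part gives $(\ell-p-e)$ times the box-constraint sum $\sum_t(-1)^t\binom{e}{t}\frac{1}{(f-\ell+p+t)!}$, already shown nonnegative in Lemma~\ref{lem: abstract box constraints}. The remaining piece is $-\sum_t(-1)^t\binom{e}{t}\frac{t}{(f-\ell+p+t)!}$, and here the $t=0$ term vanishes, so one pairs $(2s-1,2s)$ starting from $t=1$; now the signs line up and the pairing goes through using only $e<f-\ell$. This split is the missing idea in your plan.
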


\begin{proof}
We rewrite the left-hand-side as
\begin{align*}
\sum_{t=0}^e (-1)^{t} &\binom{e}{t} \frac{1}{(f-\ell-1+p+t)!} 
\left( \frac{f-e}{f-\ell+p+t} - 1 \right ) \\
&=  \sum_{t=0}^e (-1)^{t} \binom{e}{t} \frac{1}{(f-\ell-1+p+t)!} 
 \frac{\ell - (p+e) - t}{f-\ell+p+t}\\
&=
\left( \ell - (p+e) \right) \sum_{t=0}^e (-1)^{t} \binom{e}{t} \frac{1}{(f-\ell+p+t)!} 
- \sum_{t=0}^e (-1)^{t} \binom{e}{t} \frac{t}{(f-\ell+p+t)!} \\
\end{align*}
The first sum of the last expression is non-negative, as it is a multiple of the box-constraints we have already proven. So is its coefficient, since $p+e \leq \ell$. Note that for $e=0$ and $p=\ell$ the above expression equals $0$ (and in particular Constraint~\eqref{eqn: explicit cover-constraints} is 
tight). 
 Next we show that for other values of $p,e$ (either $p<\ell$ or $e>0$) 
 the above expression is strictly positive. For this it suffices to show that $\sum_{t=0}^e (-1)^{t} \binom{e}{t} \frac{t}{(f-\ell+p+t)!} \leq 0$.

We proceed again by analyzing every two consecutive terms. We observe that for $t\geq 1$ we have
\begin{align*}
- \binom{e}{2t-1} &\frac{2t-1}{(f-\ell+p+2t-1)!} +  \binom{e}{2t} \frac{2t}{(f-\ell+p+2t)!} \\
&=
\frac{\binom{e}{2t-1}}{(f-\ell+p+2t-1)!}
	\left( -(2t-1) + \frac{e-2t+1}{f-\ell+p+2t} \right) \\
&<
\frac{\binom{e}{2t-1}}{(f-\ell+p+2t-1)!}
	\left( -(2t-1) + \frac{e}{f-\ell} \right) \\
&< \frac{\binom{e}{2t-1}}{(f-\ell+p+2t-1)!}
	\left( -1 + \frac{e}{f-\ell} \right) 
\end{align*}
which is non-positive, since $f-\ell \geq e$. This argument shows that every two consecutive summands of $\sum_{t=0}^e (-1)^{t} \binom{e}{t} \frac{t}{(f-\ell+p+t)!}$ add up to a non-positive value. If $e$ is even, then we are done, while for odd $e$ the unmatched summand (for $t=e$) is negative, and so the lemma follows.
\end{proof}


\section{An $\LS_+$-based PTAS for \knapsack} \label{sec:knapsack}

We consider the \knapsack\ problem: We are given $n$ items which we identify with the integers~$[n]$, and each item $i\in[n]$ has some associated (nonnegative) reward $r_i$  
 and cost (or size) $c_i$. 
  The goal is to choose a set of items which fit in the knapsack, i.e.\ whose total cost does not exceed some bound $C$, so as to maximize the total reward. 
In what follows we will use the \iLP\ $\{ \max \sum_{i=1}^n r_i x_i: ~\sum_{i=1}^n c_i x_i  \leq C ~\&~0 \leq x_i  \leq 1 \forall~ i \in [n] \}$, which is the natural relaxation for \knapsack. 

Denote the polytope associated with the \knapsack\ \iLP\ relaxation 
by $P$. We will consider the \iSDP\ derived by applying sufficiently many levels of $\LS_+$ (as defined in Section~\ref{sec:hierarchy}) to the above \iLP. That is, for some $\ell>0$, we consider the \iSDP\ 
\begin{align*}
{\rm maximize} \qquad& \sum_{i=1}^n r_i x_i & \\
{\rm subject~to} \qquad & (1,\bx)\in K^+_\ell(P).
\end{align*}

There is a well-known simple greedy algorithm for \knapsack: Sort the items by decreasing order of $r_i/c_i$, and add them to the knapsack one at a time until the current item does not fit. The following lemma (which is folklore) relates the performance of the greedy algorithm to the value of the \iLP\ relaxation $P$:
\begin{lemma}\label{lem:greedy} Let $\bx$ be a solution to $P$, and $R_G$ be the reward given by the greedy algorithm. Then $\sum_{i}r_ix_i\leq R_G+\max_{i}r_i.$
\end{lemma}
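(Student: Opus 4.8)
The plan is to prove Lemma~\ref{lem:greedy} by a clean exchange/hybrid argument comparing the LP optimum to the greedy solution. First I would observe that for the fractional relaxation $P$, the LP optimum has a very simple structure: sorting items by decreasing $r_i/c_i$, an optimal LP solution fills items greedily in that order, taking each item fully ($x_i=1$) until the capacity $C$ is exhausted, and then taking a single fractional ``split'' item to use up whatever capacity remains (all later items get $x_i=0$). This is the standard fact that the fractional \knapsack\ is solved by the density-sorted greedy, and it dominates the value of an arbitrary feasible $\bx$, so it suffices to bound $\sum_i r_i x_i^\ast$ for this canonical LP optimum $\bx^\ast$ rather than for a general $\bx$.

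Next I would compare $\bx^\ast$ with the integral greedy solution. Let the items be indexed $1,2,\ldots,n$ in decreasing density order, and let $k$ be the index at which the integral greedy algorithm stops — that is, items $1,\ldots,k-1$ are packed and item $k$ is the first one that does not fit. The LP optimum takes items $1,\ldots,k-1$ fully as well (since the integral greedy packed exactly those, the used capacity after $k-1$ items is the same), and then takes at most a fraction of item $k$ (and nothing after). Hence
\[
\sum_{i} r_i x_i^\ast \;\le\; \sum_{i=1}^{k-1} r_i \;+\; r_k \;=\; R_G' + r_k,
\]
where $R_G' = \sum_{i=1}^{k-1} r_i$ is the reward of the prefix that greedy actually packed. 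Since $R_G \ge R_G'$ (the integral greedy keeps going past item $k$, adding only nonnegative rewards, so its final reward is at least the reward of the first $k-1$ items) and $r_k \le \max_i r_i$, we get $\sum_i r_i x_i \le \sum_i r_i x_i^\ast \le R_G' + r_k \le R_G + \max_i r_i$, as desired.

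A couple of edge cases need a sentence each: if all items fit ($k$ does not exist), then $\bx^\ast = \mathbf 1$ is integral and equals the greedy solution, so the bound is trivial; and if item $1$ already does not fit, then $R_G' = 0$ and $\sum_i r_i x_i^\ast \le r_1 \le \max_i r_i$. The main (and only) subtlety is justifying the structural claim about the LP optimum — that density-sorted greedy filling is fractionally optimal; this is folklore but should be stated, and it follows from a one-line exchange argument (if a feasible solution puts weight on a lower-density item while a higher-density item is not fully used, shifting weight to the higher-density item does not decrease the objective and keeps feasibility). Everything else is bookkeeping, so I do not expect a real obstacle here.
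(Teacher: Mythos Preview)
Your argument is correct and is exactly the standard folklore proof; the paper itself does not give a proof at all (it simply labels the lemma ``folklore'' and moves on). One small slip: under the paper's description of the greedy algorithm (``add them \ldots\ until the current item does not fit''), greedy \emph{stops} at the first item $k$ that does not fit, so in your notation $R_G = R_G'$ rather than $R_G \ge R_G'$; this only makes your inequality tighter, so the conclusion is unaffected.
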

This gives a trivial bound of 2 on the integrality gap, assuming that $c_i\leq C$ for all $i$ (that is, that each item can be a solution on its own), since we then have $R_G+\max_{i}r_i\leq 2\opt$.\footnote{Here, as before, $\opt$ denotes the optimal 0-1 solution.} We will see later that the above assumption can essentially be enforced, that is, that we can ignore items with reward greater than $C$ (see Lemma~\ref{lem:knapsack-0-1}). Lemma~\ref{lem:greedy} has the following easy corollary: Consider the above greedy algorithm, with the modification that we first add all items which have $x_i=1$ and discard all items which have $x_i=0$. Then the following holds:
\begin{corollary}\label{cor:greedy} Let $\bx$ be a solution to $P$, and let $R'_G$ be the total reward given by the above modified greedy algorithm. Then $\sum_{i}r_ix_i\leq R'_G+\max_{i:{0<x_i<1}}r_i.$
\end{corollary}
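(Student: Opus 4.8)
The plan is to reduce Corollary~\ref{cor:greedy} directly to Lemma~\ref{lem:greedy} by passing to the residual \knapsack\ instance obtained after the items with $x_i=1$ have been committed to the knapsack. Partition $[n]$ into $I_1=\{i:x_i=1\}$, $I_0=\{i:x_i=0\}$, and $I_f=\{i:0<x_i<1\}$. Since $\bx$ is feasible for $P$ we have $\sum_{i\in I_1}c_i\le\sum_{i=1}^n c_ix_i\le C$, so the residual capacity $C':=C-\sum_{i\in I_1}c_i$ is nonnegative, and we may define a \knapsack\ instance $\mathcal I'$ with item set $I_f$, the same rewards and costs, and capacity $C'$. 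The restriction $\bx$ to the coordinates in $I_f$ is feasible for the natural \iLP\ relaxation $P'$ of $\mathcal I'$, because $\sum_{i\in I_f}c_ix_i=\sum_{i=1}^n c_ix_i-\sum_{i\in I_1}c_i\le C-\sum_{i\in I_1}c_i=C'$, and the box constraints $0\le x_i\le 1$ still hold.

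Next I would observe that the modified greedy algorithm, run on the original instance, behaves exactly like ``insert all of $I_1$, then run the unmodified greedy algorithm on $\mathcal I'$''. Indeed, after inserting all of $I_1$ and discarding all of $I_0$, the remaining budget is exactly $C'$ and the remaining candidate items are exactly $I_f$; moreover the relative order of the items of $I_f$ under sorting by decreasing $r_i/c_i$ is the same whether we sort all of $[n]$ or only $I_f$. Hence $R'_G=\sum_{i\in I_1}r_i+\widetilde R_G$, where $\widetilde R_G$ denotes the reward of the (unmodified) greedy algorithm applied to $\mathcal I'$.

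Finally, applying Lemma~\ref{lem:greedy} to the instance $\mathcal I'$ with the feasible fractional solution $\bx$ restricted to $I_f$ gives
\[
\sum_{i\in I_f}r_ix_i\ \le\ \widetilde R_G+\max_{i\in I_f}r_i .
\]
Adding $\sum_{i\in I_1}r_i$ to both sides, and using that $\sum_{i\in I_1}r_ix_i=\sum_{i\in I_1}r_i$ (since $x_i=1$ there) while $\sum_{i\in I_0}r_ix_i=0$, yields
\[
\sum_{i=1}^n r_ix_i=\sum_{i\in I_1}r_i+\sum_{i\in I_f}r_ix_i\ \le\ \Big(\sum_{i\in I_1}r_i+\widetilde R_G\Big)+\max_{i\in I_f}r_i=R'_G+\max_{i:0<x_i<1}r_i,
\]
which is exactly the claimed inequality. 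The argument is entirely routine; the only points needing a moment's care are verifying that $C'\ge 0$ (which is where feasibility of $\bx$ for $P$ is invoked) and that the sorting order is preserved under restriction to $I_f$, so that the ``insert-then-greedy'' description of the modified algorithm is literally accurate. I do not expect any genuine obstacle here.
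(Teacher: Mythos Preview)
Your proposal is correct and follows exactly the natural reduction the paper has in mind: the paper states the result as an ``easy corollary'' of Lemma~\ref{lem:greedy} without giving a separate proof, and your argument---pass to the residual instance on $I_f$ with capacity $C'=C-\sum_{i\in I_1}c_i$, apply Lemma~\ref{lem:greedy} there, and add back the contribution of $I_1$---is precisely how one unpacks that claim.
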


We will show that, for any constant $\eps>0$, there is a constant $L_\eps$ such that the \iSDP\ relaxation for \knapsack\ arising from level $L_\eps$ of $\LS_+$ has integrality gap at most $1+O(\eps)$. For the Lasserre hierarchy, this has been shown for level $1/\eps$~\cite{KMN11}. 
 We will show this for $L_\eps=1/\eps^3$ in the case of $\LS_+$.

Our rounding algorithm will take as input the values of the \knapsack\ instance $(r_i)_i$, $(c_i)_i$, and $C$, an optimal solution $\bx$ s.t.\ $(1,\bx)\in K^+_\ell(P)$ (for some level $\ell>0$, initially $\ell=L_{\eps}$), and parameters $\eps$ and $\rho$. 
  The parameter $\rho$ is intended to be the threshold $\eps\cdot\opt$ in the set $S_{\eps\opt}=\{i\mid r_i > \eps\cdot\opt\}.$ 
  Rather than guessing a value for $\opt$, though, we will simply try all values of $\rho\in\{r_i\mid i\in[n]\}\cup\{0\}$ and note that for exactly one of those values, the set $S_{\eps\opt}$ coincides with  the set %
 $\{i\mid r_i > \rho\}$ (also note that $\rho$ is a parameter of the rounding, and not involved at all in the \iSDP\ relaxation).
 
The intuition behind our rounding algorithm is as follows: As we did for \setc, we would like to repeatedly ``condition" on setting some variable to $1$, by using Fact~\ref{fact: ls conditioning}. If we condition only on (variables corresponding to) items in $S_{\eps\opt}$, then after at most $1/\eps$ iterations, the \iSDP\ solution will be integral on that set, and then by Corollary~\ref{cor:greedy} the modified greedy algorithm will give a $1+O(\eps\opt)$ approximation relative to the value of the objective function (since items outside $S_{\eps\opt}$ have reward at most $\eps\opt$). The problem with this approach (and the reason why \iLP\ hierarchies do not work), is the same problem as for \setc: the conditioning step does not preserve the value of the objective function. While the optimum value of the \iSDP\ is at least $\opt$ by definition, after conditioning, the value of the new solution may be much smaller than $\opt$, which then makes the use of Corollary~\ref{cor:greedy} meaningless. The key observation is that the use of \iSDP{s} ensures that we can choose some item to condition on without any decrease in the objective function (see Lemma~\ref{lem:increase-obj-fun}).\footnote{Note that this is crucial for a maximization problem like \knapsack, while for a minimization problem like \setc\ it does not seem helpful (and indeed, by the integrality gap of Alekhnovich et al.~\cite{alekhnovich:arora:tourlakis}, we know it does not help).} A more refined analysis shows that we will be able condition {\em specifically on items in $S_{\eps\opt}$} without losing too much. Counter-intuitively, we then need to show that the algorithm does not perform an unbounded number of conditioning steps which {\em increase} the objective value (see Lemma~\ref{lem:recurse-depth}). Our  rounding algorithm {\bf{KS-Round}} is described in Algorithm~\ref{alg:knapsack}, while the performance guarantee is described in Section~\ref{sec:knapsack_app}. 
\begin{algorithm*}[ht]
  \caption{~\bf{KS-Round}$((r_i)_i,(c_i)_i,C,\bx,\eps, \rho
)$} \label{alg:knapsack} 
\begin{algorithmic}[1] 
\State Let $\by\in\powerset_2$ be the moment vector associated with $(1,\bx)$.
\State Let $S_\rho \leftarrow \{i\mid r_i>\rho\}$, and let $S^b \leftarrow \{i\mid x_i =b\}$ for $b=0,1$. \label{step:knapsack:S-rho}
\If{$S_\rho\subseteq S^0\cup S^1$}
\State Run the modified greedy algorithm. \label{step:knapsack:0-1}
\ElsIf{$\displaystyle\sum_{i\in S_\rho\setminus S^1}r_ix_i < \eps\cdot\sum_{i=1}^n r_ix_i$}
\State Run the modified greedy algorithm on items in $([n]\setminus S_\rho)\cup S^1$.  \label{step:knapsack:small-rewards}
\ElsIf{there is some $i\in S_\rho\setminus(S^0\cup S^1)$ s.t.\ $\displaystyle\sum_{j=1}^n r_jy_{\{i,j\}}\geq (1-\eps^2)x_i\cdot\sum_{j=1}^nr_jx_j$}
\State Run {\bf{KS-Round}}$((r_i)_i,(c_i)_i,C,\frac1{x_i}Y^{[\by]} \be_i,
\eps, \rho)$. \Comment See Fact~\ref{fact: ls conditioning}
\label{step:knapsack:recurse-S-rho}
\Else\State Choose $i\in[n]\setminus (S_\rho\cup S^0)$ s.t.\ $\displaystyle\sum_{j=1}^n r_jy_{\{i,j\}}>(1+\eps^3)x_i\cdot\sum_{j=1}^nr_jx_j$ \Comment See Lemma~\ref{lem:strict-increase} \label{step:knapsack:choose-increase}
\State Run {\bf{KS-Round}}$((r_i)_i,(c_i)_i,C,\frac1{x_i}Y^{[\by]} \be_i,
\eps, \rho)$. \Comment See Fact~\ref{fact: ls conditioning}
\label{step:knapsack:recurse-increase}
\EndIf
\end{algorithmic}
\end{algorithm*}


\subsection{Analysis of Algorithm~\ref{alg:knapsack}}
\label{sec:knapsack_app}

Before we analyze the performance guarantee of the algorithm, there is one more simple fact about both $\LS$ and $\LS_+$ which we will use in this section.

\begin{fact}\label{fact:LS-for-x_i=1}
Given a solution $(1,\bx)\in K_{\ell}(P)$ for some $t\geq 1$ and corresponding moment vector $\by\in\powerset_2$, if $x_i=1$ for some $i\in[n]$, then $Y^{[y]}\be_i=Y^{[y]}\be_0(=(1,\bx))$.
\end{fact}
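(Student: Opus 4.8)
The plan is to peel off one level of the \ls\ recursion and observe that the hypothesis $x_i=1$ forces the ``complementary'' column of the moment matrix to vanish. Since $\ell\geq 1$, membership $(1,\bx)\in K_\ell(P)$ is witnessed by a moment vector $\by\in\powerset_2$ with $Y^{[\by]}\be_0=(1,\bx)$ and with both $Y^{[\by]}\be_i$ and $Y^{[\by]}(\be_0-\be_i)$ lying in $K_{\ell-1}(P)$, and hence in $K_0(P)=\{(x_0,x_0\bx')\mid x_0\geq 0,\ \bx'\in P\}$. (For $\LS_+$ the same holds, with the added PSD condition on $Y^{[\by]}$, which we will not need.) Trivially $Y^{[\by]}\be_i+Y^{[\by]}(\be_0-\be_i)=Y^{[\by]}\be_0=(1,\bx)$, so it suffices to show the second summand is $\bo$.

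The key step is to read off the homogenizing coordinate (the $\be_0$-entry, i.e.\ the row indexed by $\emptyset$) of the vector $Y^{[\by]}(\be_0-\be_i)$: by definition of the moment matrix this equals $y_\emptyset-y_{\{i\}}=1-x_i$, which is $0$ under the hypothesis $x_i=1$. Now any vector in $K_0(P)$ whose $x_0$-coordinate is $0$ must be the all-zero vector, since $x_0\bx'=\bo$ whenever $x_0=0$. Applying this to $Y^{[\by]}(\be_0-\be_i)\in K_0(P)$ yields $Y^{[\by]}(\be_0-\be_i)=\bo$, and therefore $Y^{[\by]}\be_i=Y^{[\by]}\be_0=(1,\bx)$, which is exactly the claim.

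I do not expect a genuine obstacle here; the only points requiring care are that the argument really does use $\ell\geq 1$ (so that the recursive decomposition is available), and keeping straight that the row-$A$ entry of the column $Y^{[\by]}\be_i$ is $y_{A\cup\{i\}}$ — in particular its row-$\emptyset$ entry is $y_{\{i\}}=x_i$. If one prefers to avoid appealing to the exact form of $K_0(P)$, one can instead use that every $(x_0,\bz)\in K_0(P)$ satisfies the scaled box constraints $0\leq z_j\leq x_0$; with $x_0=0$ this again forces $\bz=\bo$, and reading $Y^{[\by]}(\be_0-\be_i)=\bo$ coordinatewise gives $y_{\{i,j\}}=x_j$ for every $j$, i.e.\ $Y^{[\by]}\be_i=(1,\bx)$ once more.
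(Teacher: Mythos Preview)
Your proof is correct and follows essentially the same route as the paper's: the paper observes that $Y^{[\by]}(\be_0-\be_i)\in K_{\ell-1}(P)$ has first coordinate $1-x_i=0$, which in the conified polytope forces the entire vector to be zero, hence $Y^{[\by]}\be_i=Y^{[\by]}\be_0$. Your write-up is simply a more detailed version of the same argument, including the helpful alternative via the scaled box constraints.
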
 
This fact follows easily from the fact that $Y^{[\by]} \left( \be_0 - \be_i\right) \in K_{t-1}(P)$, and since if $x_i=y_{\{i\}}=1$ then the first entry in the above vector is $0$, which for the conified polytope $K_{t-1}(P)$ only holds for the all-zero vector.

Now, consider the set $S_\rho$ defined in Step~\ref{step:knapsack:S-rho}. As we have pointed out, for the appropriate choice of $\rho$, this set coincides with the set $S_{\eps\opt}$. Note that, by Corollary~\ref{cor:greedy}, if all the $x_i$ values in $S_\rho$ are integral, then Step~\ref{step:knapsack:0-1} returns a solution with value $R'_G$ satisfying
$$\opt \geq R'_G \geq \sum_ir_ix_i - \max_{i\not\in S_\rho}r_i \geq \sum_ir_ix_i - \eps\cdot\opt.$$
Now, if $\bx$ is the original \iSDP\ solution given to the rounding algorithm, this gives an upper bound of $1+\eps$ on the integrality gap (as well as a $(1+\eps)$-approximation). However, in Steps~\ref{step:knapsack:recurse-S-rho} and~\ref{step:knapsack:recurse-increase}, we recurse with a new \iSDP\ solution (to a lower level in the $\LS_+$ hierarchy). Thus, our goal will be to arrive at an \iSDP\ solution which is integral on $S_\rho$ (or assigns so little weight to $S_\rho$ that we can ignore it), but we need to show that the objective function does not decrease too much during these recursive steps. To show this, we will rely crucially on the following easy lemma, which is also the only place where we use positive-semidefiniteness. 

\begin{lemma}\label{lem:increase-obj-fun} Let $(1,\bx)$ be a solution to $K^+_\ell(P)$ for some $\ell\geq 1$, with the corresponding moment vector $\by$. Then the solution satisfies $$\sum_{i=1}^nr_i\sum_{j=1}^nr_jy_{\{i,j\}}\geq\left(\sum_{i=1}^n r_ix_i\right)^2 \left(= \sum_{i=1}^n r_i \sum_{j=1}^n x_ir_jx_j\right).$$
\end{lemma}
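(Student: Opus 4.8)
The statement asserts $\sum_{i,j} r_i r_j y_{\{i,j\}} \geq \left(\sum_i r_i x_i\right)^2$, and the hint says positive-semidefiniteness of the moment matrix $Y^{[\by]}$ is the only ingredient. So the plan is to express the left-hand side as a quadratic form in $Y^{[\by]}$ evaluated on a suitable vector, express the right-hand side as a rank-one quadratic form, and compare the two using $Y^{[\by]} \succeq 0$.

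Let me think about which vector to use. Recall $Y^{[\by]}$ is indexed by subsets of $[n]$ of size at most $1$, i.e., by $\{0\} \cup [n]$, with $Y^{[\by]}_{0,0} = y_\emptyset = 1$, $Y^{[\by]}_{0,i} = Y^{[\by]}_{i,i} = y_{\{i\}} = x_i$, and $Y^{[\by]}_{i,j} = y_{\{i,j\}}$ for $i \neq j$. Consider the vector $\bw \in \Real^{n+1}$ defined by $w_0 = 0$ and $w_i = r_i$ for $i \in [n]$. Then $\bw^\top Y^{[\by]} \bw = \sum_{i,j \in [n]} r_i r_j y_{\{i,j\}}$, which is exactly the left-hand side. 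Now I need to relate this to $\left(\sum_i r_i x_i\right)^2$. The natural move: also consider $\be_0$, for which $\be_0^\top Y^{[\by]} \be_0 = 1$ and $\be_0^\top Y^{[\by]} \bw = \sum_i r_i x_i$. Positive-semidefiniteness of the $2\times 2$ Gram matrix of $\{Y^{[\by]}\be_0, Y^{[\by]}\bw\}$ under the form (equivalently, Cauchy--Schwarz for the PSD bilinear form $\langle \bu, \bv\rangle := \bu^\top Y^{[\by]} \bv$) gives $(\be_0^\top Y^{[\by]} \bw)^2 \leq (\be_0^\top Y^{[\by]} \be_0)(\bw^\top Y^{[\by]} \bw)$, i.e.\ $\left(\sum_i r_i x_i\right)^2 \leq 1 \cdot \sum_{i,j} r_i r_j y_{\{i,j\}}$, which is precisely the claim.

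So the key steps, in order: (1) recall the indexing and entries of $Y^{[\by]}$ from Definition~\ref{def: LS definition}; (2) observe $Y^{[\by]} \succeq 0$ since $(1,\bx) \in K^+_\ell(P)$ with $\ell \geq 1$; (3) introduce $\bw = (0, r_1, \dots, r_n)$ and compute $\bw^\top Y^{[\by]} \bw = \sum_{i,j} r_i r_j y_{\{i,j\}}$, $\be_0^\top Y^{[\by]} \bw = \sum_i r_i x_i$, and $\be_0^\top Y^{[\by]} \be_0 = y_\emptyset = 1$; (4) apply the Cauchy--Schwarz inequality for the PSD form $\bu^\top Y^{[\by]} \bv$ to the pair $(\be_0, \bw)$ and conclude. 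I don't anticipate a real obstacle here — the only mild care needed is bookkeeping with the diagonal entries (making sure $y_{\{i,i\}}$ is read as $y_{\{i\}} = x_i$, consistent with the paper's convention for multisets/repeated indices, so that the parenthetical identity $\sum_i r_i \sum_j x_i r_j x_j = (\sum_i r_i x_i)^2$ in the statement matches), and confirming that the relevant submatrix is genuinely PSD as a principal-type object — but this is immediate since any expression $\bv^\top Y^{[\by]} \bv \geq 0$ for all $\bv$ is exactly what $Y^{[\by]} \succeq 0$ means.
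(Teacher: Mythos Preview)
Your proposal is correct and essentially the same as the paper's proof. The paper applies $\ba^\top Y^{[\by]}\ba\ge 0$ directly to the single vector $\ba=(-\sum_i r_i x_i,\,r_1,\ldots,r_n)=\bw-(\sum_i r_i x_i)\be_0$, which is exactly the vector that realizes the Cauchy--Schwarz inequality you invoke for the pair $(\be_0,\bw)$; the two formulations are equivalent.
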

\begin{proof} By the positive semidefiniteness of the moment matrix $Y^{[\by]}$, we have $\ba^{\top} Y^{[\by]} \ba\geq 0$ for any vector $\ba\in\Real^{n+1}$. Then the lemma follows immediately from this inequality, by letting $\ba=(a_i)_i$ be the vector defined by $a_i=r_i$ for $i\in[n]$ and $a_0=-\sum_{i=1}^nr_ix_i$.
\end{proof}

Thus, there is some item $i\in[n]$ on which we can condition (by taking the new solution $\frac1{x_i}Y^{[\by]} \be_i$, 
 without any decrease in the value of the objective function. Moreover, using the above lemma, we can now show that the algorithm is well-defined (assuming we have start with an \iSDP\ at a sufficiently high level of $\LS_+$ for all the recursive steps).

\begin{lemma}\label{lem:strict-increase}
If Step~\ref{step:knapsack:choose-increase} is reached, then there exists an item $i\in[n]\setminus (S_\rho\cup S^0)$ satisfying 
\begin{equation}\label{eq:lem:strict-increase}\displaystyle\sum_{j=1}^n r_jy_{\{i,j\}}>(1+\eps^3)x_i\cdot\sum_{j=1}^nr_jx_j.
\end{equation}
\end{lemma}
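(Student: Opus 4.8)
The plan is to argue by contradiction. Suppose no $i\in[n]\setminus(S_\rho\cup S^0)$ satisfies \eqref{eq:lem:strict-increase}. Since Step~\ref{step:knapsack:choose-increase} is reached, the three preceding branch tests all failed, which tells us: (a) $S_\rho\not\subseteq S^0\cup S^1$; (b) $\sum_{i\in S_\rho\setminus S^1}r_ix_i\geq\eps\sum_jr_jx_j$; and (c) every $i\in S_\rho\setminus(S^0\cup S^1)$ satisfies $\sum_jr_jy_{\{i,j\}}<(1-\eps^2)x_i\sum_jr_jx_j$. Write $R:=\sum_jr_jx_j$. From (a) there is a coordinate $i\in S_\rho$ with $0<x_i<1$, and since $r_i>\rho\geq 0$ this gives $R\geq r_ix_i>0$; I would record this at the outset, since I will divide by $R$.

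Next I would evaluate $\sum_jr_jy_{\{i,j\}}$ on the ``decided'' coordinates. If $x_i=1$, Fact~\ref{fact:LS-for-x_i=1} gives $Y^{[\by]}\be_i=Y^{[\by]}\be_0=(1,\bx)$, hence $y_{\{i,j\}}=x_j$ for all $j$ and $\sum_jr_jy_{\{i,j\}}=R=x_iR$. If $x_i=0$, then $Y^{[\by]}\be_i\in K^+_{\ell-1}(P)\subseteq K_0(P)$ has leading ($\emptyset$-)coordinate $y_{\{i\}}=x_i=0$, and the only vector of the conified cone $K_0(P)$ with vanishing leading coordinate is $\bo$; thus $Y^{[\by]}\be_i=\bo$ and $\sum_jr_jy_{\{i,j\}}=0=x_iR$. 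So on $S^0\cup S^1$ the quantity equals exactly $x_iR$.

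Now partition $[n]$ into $A:=S^0$, $B:=S^1$, $C:=S_\rho\setminus(S^0\cup S^1)$, $D:=[n]\setminus(S_\rho\cup S^0\cup S^1)$, and write $R_G:=\sum_{i\in G}r_ix_i$, so that $R=R_B+R_C+R_D$ (as $R_A=0$). Combining the previous paragraph, (c), and the contradiction hypothesis (which covers $D\subseteq[n]\setminus(S_\rho\cup S^0)$), I get, for every $i$, an upper bound on $\sum_jr_jy_{\{i,j\}}$: namely $x_iR$ on $A\cup B$, $(1-\eps^2)x_iR$ on $C$, and $(1+\eps^3)x_iR$ on $D$. Multiplying by $r_i$, summing over $i$, and comparing with the semidefinite inequality $\sum_ir_i\sum_jr_jy_{\{i,j\}}\geq R^2$ of Lemma~\ref{lem:increase-obj-fun} yields
$$R^2\ \leq\ R\big(R_B+(1-\eps^2)R_C+(1+\eps^3)R_D\big)\ =\ R\big(R-\eps^2R_C+\eps^3R_D\big).$$
Dividing by $R>0$ gives $\eps^2R_C\leq\eps^3R_D$, i.e.\ $R_C\leq\eps R_D\leq\eps(R-R_C)$, so $R_C\leq\frac{\eps}{1+\eps}R<\eps R$. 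On the other hand $S_\rho\setminus S^1=C\cup(S_\rho\cap S^0)$ with $x_i=0$ on $S^0$, so $R_C=\sum_{i\in S_\rho\setminus S^1}r_ix_i\geq\eps R$ by (b) --- a contradiction. Hence the desired item $i\in[n]\setminus(S_\rho\cup S^0)$ exists (in fact in $D$).

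I do not expect a real obstacle here; the two points needing a little care are the exact evaluations on $S^0$ and $S^1$ (the $S^0$ case resting on the conified cone having $\bo$ as its only element with vanishing leading coordinate, exactly as in the remark following Fact~\ref{fact:LS-for-x_i=1}), and remembering to observe $R>0$ before dividing by it. Everything else is a short consequence of Lemma~\ref{lem:increase-obj-fun} together with the branch conditions under which Step~\ref{step:knapsack:choose-increase} is executed.
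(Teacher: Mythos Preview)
Your argument is correct and uses the same ingredients as the paper: the branch conditions (b) and (c), Lemma~\ref{lem:increase-obj-fun}, and Fact~\ref{fact:LS-for-x_i=1} for items in $S^1$. The only organizational difference is that you argue by contradiction with an explicit four-part partition $A,B,C,D$ and arrive at the clean inequality $\eps^2R_C\le\eps^3R_D$, whereas the paper argues directly by averaging over $([n]\setminus(S_\rho\cup S^0))\cup S^1$ and only at the end rules out $i\in S^1$; the two are algebraically equivalent.
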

\begin{proof}
Note that if Step~\ref{step:knapsack:choose-increase} is reached then we must have 
\begin{equation}\label{eq:lem-increase-1}\displaystyle\sum_{i\in S_\rho\setminus S^1}r_ix_i \geq \eps\cdot\sum_{i=1}^n r_ix_i.
\end{equation}
Moreover, for all $i\in S_\rho\setminus(S^0\cup S^1)$ we have $\displaystyle\sum_{j=1}^n r_jy_{\{i,j\}}< (1-\eps^2)x_i\cdot\sum_{j=1}^nr_jx_j$. In particular, we have 
\begin{equation}\label{eq:lem-increase-2}\sum_{i\in S_\rho\setminus S^1}r_i\sum_{j=1}^n r_jy_{\{i,j\}}< (1-\eps^2)\sum_{i\in S_\rho\setminus S^1}r_ix_i\cdot\sum_{j=1}^nr_jx_j.
\end{equation} 
Thus (noting that all $i\in S^0$ contribute nothing to the following sums), we have
\begin{align*} \sum_{i\in ([n]\setminus (S_\rho\cup S^0))\cup S^1}r_i\sum_{j=1}^n r_jy_{\{i,j\}} &= \sum_{i=1}^n r_i\sum_{j=1}^n r_jy_{\{i,j\}} - \sum_{i\in S_\rho\setminus S^1}r_i\sum_{j=1}^n r_jy_{\{i,j\}}\\
&\geq \left(\sum_{i=1}^n r_ix_i\right)^2 - \sum_{i\in S_\rho\setminus S^1}r_i\sum_{j=1}^n r_jy_{\{i,j\}} &\text{by Lemma~\ref{lem:increase-obj-fun}}\\
&> \left(\sum_{i=1}^n r_ix_i\right)^2 - (1-\eps^2)\sum_{i\in S_\rho\setminus S^1}r_ix_i\cdot\sum_{j=1}^nr_jx_j &\text{by~\eqref{eq:lem-increase-2}}\\
&=\sum_{i\in ([n]\setminus S_\rho)\cup S^1}r_ix_i\cdot\sum_{j=1}^nr_jx_j + \eps^2\sum_{i\in S_\rho\setminus S^1}r_ix_i\cdot\sum_{j=1}^nr_jx_j\\
&\geq\sum_{i\in ([n]\setminus S_\rho)\cup S^1}r_ix_i\cdot\sum_{j=1}^nr_jx_j + \eps^3\sum_{i=1}^nr_ix_i\cdot\sum_{j=1}^nr_jx_j &\text{by~\eqref{eq:lem-increase-1}}\\
&\geq (1+\eps^3)\cdot \sum_{i\in ([n]\setminus S_\rho)\cup S^1}r_ix_i\cdot\sum_{j=1}^nr_jx_j.
\end{align*}

Therefore, there is some $i\in ([n]\setminus (S_\rho\cup S^0))\cup S^1$ satisfying~\eqref{eq:lem:strict-increase}. We only need to show that $i\not\in S^1$, that is, that $x_i < 1$. However, by Fact~\ref{fact:LS-for-x_i=1}, 
 if $x_i=1$ then $y_{\{i,j\}}=x_j$, making inequality~\eqref{eq:lem:strict-increase} impossible for such $i$.
\end{proof}

Now that we have shown the algorithm to be well-defined, let us start to bound the depth of the recursion. We will use the following lemma to show that after a bounded number of recursive calls in Step~\ref{step:knapsack:recurse-S-rho}, the \iSDP\ solution becomes integral on $S_\rho$:
\begin{lemma}\label{lem:knapsack-0-1} Let $(1,\bx)$ be a solution to $K^+_\ell$ for some $\ell\geq 1$. Then for all items $i\in[n]\setminus S^1$ such that $c_i > C - \sum_{j\in S^1}c_j$, we have $x_i=0$.
\end{lemma}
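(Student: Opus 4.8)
The plan is to argue by contradiction. Suppose there is an item $i\in[n]\setminus S^1$ with $c_i > C-\sum_{j\in S^1}c_j$ but $x_i>0$; since $i\notin S^1$ we also have $x_i\neq 1$, so in fact $0<x_i<1$. The idea is to use the hierarchy to manufacture a feasible fractional solution of $P$ that assigns value $1$ to every item of $S^1\cup\{i\}$ simultaneously, and then observe that this violates the knapsack budget constraint $\sum_k c_kx_k\le C$ of $P$ because all costs are nonnegative.

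To carry this out, first I would condition on $x_i$. Since $K^+_\ell(P)\subseteq K_\ell(P)$, the point $(1,\bx)$ lies in $K_\ell(P)$ with $\ell\ge 1$, so Fact~\ref{fact: ls conditioning} applies: the rescaled column $\frac1{x_i}Y^{[\by]}\be_i$ lies in $K_{\ell-1}(P)\cap\{(1,\bx')\mid\bx'\in[0,1]^n\}$, and as $K_{\ell-1}(P)\subseteq K_0(P)$ this forces $\bx'\in P$; in particular $\sum_k c_kx'_k\le C$. Moreover the $i$-th coordinate of $\frac1{x_i}Y^{[\by]}\be_i$ equals $\frac1{x_i}\,y_{\{i\}\cup\{i\}}=\frac1{x_i}\,y_{\{i\}}=\frac{x_i}{x_i}=1$, so $x'_i=1$. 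Next, for every $j\in S^1$ the coordinate $x_j=1$ is integral, so Fact~\ref{fact: ls integrality} (applied to the single conditioning step producing $\bx'$ from $\bx$) gives $x'_j=x_j=1$. Hence $x'_k=1$ for all $k\in S^1\cup\{i\}$, and since all costs are nonnegative,
\[
C\;\ge\;\sum_{k=1}^n c_k x'_k\;\ge\;c_ix'_i+\sum_{j\in S^1}c_jx'_j\;=\;c_i+\sum_{j\in S^1}c_j,
\]
so $c_i\le C-\sum_{j\in S^1}c_j$, contradicting the hypothesis. Therefore $x_i=0$.

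I do not expect a genuine obstacle here: the argument is a direct application of the two conditioning facts of Section~\ref{sec: preliminaries on ls}. The only points needing a word of care are that a positive-semidefinite solution is in particular an $\LS$ solution (so those facts apply unchanged), and that the conditioned solution has $x'_i=1$, which is immediate from the moment-matrix structure $Y^{[\by]}_{A,B}=y_{A\cup B}$ (so positive-semidefiniteness plays no role in this lemma). This lemma is exactly what drives the termination of Step~\ref{step:knapsack:recurse-S-rho}: once enough items of $S_\rho$ have been forced to $1$, the residual budget $C-\sum_{j\in S^1}c_j$ shrinks below $c_i$ for each remaining large-reward item, pinning its value to $0$ (or $1$) and making the solution integral on $S_\rho$.
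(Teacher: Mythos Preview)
Your proof is correct and follows essentially the same approach as the paper: assume $0<x_i<1$, condition on $i$ via Fact~\ref{fact: ls conditioning} to obtain a point of $P$ with $i$-th coordinate equal to $1$, observe that all coordinates in $S^1$ remain equal to $1$, and derive a contradiction with the capacity constraint. The only cosmetic difference is that the paper invokes Fact~\ref{fact:LS-for-x_i=1} (that $x_j=1$ forces $y_{\{i,j\}}=x_i$) to see that the $S^1$-coordinates stay at $1$, whereas you invoke the equivalent Fact~\ref{fact: ls integrality}; the resulting inequality and contradiction are identical.
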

\begin{proof} Suppose, for the sake of contradiction, that there is some item $i$ satisfying this property, with $0<x_i<1$. By Fact~\ref{fact:LS-for-x_i=1}, 
for every $j\in S^1$ we have $y_{\{i,j\}} = x_i$. However, by Fact~\ref{fact: ls conditioning}, 
 the vector $\frac1{x_i}Y^{[\by]} \be_i$ 
  satisfies the constraints of $K^+_{\ell-1}$, and in particular the capacity constraint. 
But this is a contradiction, since we have $${\textstyle\frac1{x_i}}\sum_{j=1}^nc_jy_{\{i,j\}}=c_i+\sum_{j\in S^1}c_j+\sum_{j\in[n]\setminus(S^1\cup\{i\})}c_jx_j > C.$$
\end{proof}

Since no $1/\eps$ items from $S_{\eps\opt}$ can fit simultaneously in the knapsack, we have the following:

\begin{corollary}\label{cor:knapsack-0-1} For $\rho$ s.t.\ $S_\rho=S_{\eps\opt}$, the recursion in Step~\ref{step:knapsack:recurse-S-rho} cannot be repeated $\lceil 1/\eps \rceil$ times. Moreover, if this step is repeated $(\lceil 1/\eps\rceil-1)$ times, then after these recursive calls, we have $x_i\in\{0,1\}$ for all $i\in S_\rho$.
\end{corollary}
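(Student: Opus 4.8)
The plan is to reduce both statements to one elementary observation: at most $\lceil 1/\eps\rceil-1$ items of $S_{\eps\opt}$ can fit in the knapsack simultaneously. This is immediate from the definitions: if $k$ items of $S_{\eps\opt}$ fit, then since each has reward strictly more than $\eps\cdot\opt$, together they form a feasible solution of reward strictly more than $k\eps\cdot\opt$; as $\opt$ is the maximum reward of a feasible solution this forces $k\eps\cdot\opt<\opt$, i.e.\ $k<1/\eps$, and hence $k\le\lceil 1/\eps\rceil-1$. (The degenerate case $\opt=0$ is trivial, since after the reduction $c_i\le C$ for all $i$, so $\opt=0$ means all rewards are $0$ and $S_{\eps\opt}=\emptyset$.)

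Next I would track the effect of each invocation of Step~\ref{step:knapsack:recurse-S-rho}. That step conditions on an item $i\in S_\rho\setminus(S^0\cup S^1)$, so by Fact~\ref{fact: ls conditioning} the new solution $\frac1{x_i}Y^{[\by]}\be_i$ is a valid point of the next $\LS_+$ level whose $i$-th coordinate is $x_i/x_i=1$, and since $i$ was fractional it had not been fixed to $1$ before. By Fact~\ref{fact: ls integrality}, a coordinate that has become integral stays integral under all later conditionings, including those performed in Step~\ref{step:knapsack:recurse-increase}. Consequently, once Step~\ref{step:knapsack:recurse-S-rho} has been used $t$ times, the current \iSDP\ solution fixes at least $t$ distinct items of $S_\rho=S_{\eps\opt}$ to the value $1$. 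Because this solution lies in $K^+_{\ell'}(P)\subseteq P$ and the costs are nonnegative, these $t$ items have total cost at most $\sum_j c_jx_j\le C$, so they fit in the knapsack; by the observation above, $t\le\lceil 1/\eps\rceil-1$. This proves the first claim.

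For the ``moreover'' part, suppose Step~\ref{step:knapsack:recurse-S-rho} has been used exactly $\lceil 1/\eps\rceil-1$ times, let $\bx$ be the resulting solution, and let $F\subseteq S_\rho\cap S^1$ be the (at least $\lceil 1/\eps\rceil-1$) items of $S_\rho$ that $\bx$ fixes to $1$. Assume for contradiction that some $i\in S_\rho$ has $0<x_i<1$; then $i\notin F$. Applying Lemma~\ref{lem:knapsack-0-1} to $\bx$, the fact that $x_i>0$ forces $c_i\le C-\sum_{j\in S^1}c_j$, so $\{i\}\cup F$ has total cost at most $c_i+\sum_{j\in S^1}c_j\le C$ and hence fits in the knapsack. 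But $\{i\}\cup F$ is a set of at least $\lceil 1/\eps\rceil$ distinct items of $S_{\eps\opt}$, contradicting the observation. Therefore $x_i\in\{0,1\}$ for every $i\in S_\rho$ (and in fact the algorithm then terminates at once via its first branch).

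The argument is essentially a counting argument, so there is no serious obstacle; the only points that need care are the correct handling of the strict inequalities and the ceiling (deducing $k\le\lceil 1/\eps\rceil-1$ from $k<1/\eps$), and the use of Lemma~\ref{lem:knapsack-0-1} to slip one extra item into the knapsack in the ``moreover'' part. For cleanliness one should also record that every \iSDP\ solution arising in the recursion remains a genuine point of some $K^+_{\ell'}(P)$ with $\ell'\ge 0$ — which holds once we start at a high enough level, as needed to show the algorithm is well-defined — so that the capacity constraint of $P$ is available throughout.
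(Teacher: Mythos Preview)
Your proof is correct and follows exactly the approach the paper intends: the paper justifies the corollary with the single sentence ``Since no $1/\eps$ items from $S_{\eps\opt}$ can fit simultaneously in the knapsack,'' and you have filled in precisely those details using Lemma~\ref{lem:knapsack-0-1}, Fact~\ref{fact: ls conditioning}, and Fact~\ref{fact: ls integrality}. One tiny nit: in the ``moreover'' part you invoke Lemma~\ref{lem:knapsack-0-1}, which requires the current solution to lie in $K^+_{\ell'}(P)$ with $\ell'\ge 1$ (not just $\ell'\ge 0$), but your closing remark about starting at a high enough level covers this as well.
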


We can now bound the total depth of the recursion in the algorithm.

\begin{lemma}\label{lem:recurse-depth} For all positive $\eps\leq0.3$, the algorithm performs the recursion in Step~\ref{step:knapsack:recurse-increase} less than $\lceil1/\eps^3\rceil$ times.
\end{lemma}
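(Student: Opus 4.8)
The goal is to bound the number of times Step~\ref{step:knapsack:recurse-increase} is executed, i.e.\ the number of conditioning steps that \emph{increase} the objective value (rather than conditioning on items of $S_\rho$). The natural potential to track is $\Phi(\bx):=\sum_{i=1}^n r_ix_i$, the value of the objective function of the current \iSDP\ solution. When Step~\ref{step:knapsack:choose-increase}--\ref{step:knapsack:recurse-increase} is taken on item $i$, the new solution is $\bx' := \frac1{x_i}Y^{[\by]}\be_i$, whose objective value is $\frac1{x_i}\sum_{j=1}^n r_j y_{\{i,j\}}$, and by the choice of $i$ in Step~\ref{step:knapsack:choose-increase} this is strictly larger than $(1+\eps^3)\Phi(\bx)$. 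So each such step multiplies $\Phi$ by a factor more than $1+\eps^3$. The plan is therefore to show: (a) $\Phi$ never \emph{decreases} during the whole run of the algorithm, so the multiplicative gains from Step~\ref{step:knapsack:recurse-increase} accumulate; and (b) $\Phi$ is bounded above (by $\opt$, or at most some absolute constant multiple thereof), relative to its initial value. Combining (a) and (b), if Step~\ref{step:knapsack:recurse-increase} were taken $k$ times we would get $\Phi_{\text{final}}>(1+\eps^3)^k\,\Phi_{\text{init}}$, and $(1+\eps^3)^k$ already exceeds the allowed ratio once $k\geq\lceil1/\eps^3\rceil$ (using $(1+\eps^3)^{1/\eps^3}>2$ and a crude bound $\Phi_{\text{final}}/\Phi_{\text{init}}\le 2$ coming from $\Phi_{\text{init}}\ge\opt/2$ via the trivial integrality gap remark, and $\Phi_{\text{final}}\le\opt$).

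For (a), the point is to check each branch. In Step~\ref{step:knapsack:recurse-increase} we just argued $\Phi$ strictly increases. In Step~\ref{step:knapsack:recurse-S-rho} we condition on some $i\in S_\rho$ for which $\sum_j r_j y_{\{i,j\}}\ge(1-\eps^2)x_i\cdot\Phi(\bx)$, so $\Phi(\bx')\ge(1-\eps^2)\Phi(\bx)$; this is a (small) \emph{decrease}, not an increase, so I must be a bit more careful here. The clean way is to account for the two kinds of steps separately: Step~\ref{step:knapsack:recurse-S-rho} is invoked at most $\lceil1/\eps\rceil-1$ times by Corollary~\ref{cor:knapsack-0-1}, so the cumulative loss from those steps is a factor of at least $(1-\eps^2)^{1/\eps}$, which for $\eps\le 0.3$ is bounded below by an absolute constant (say $\ge 1/2$, or one can get $\ge e^{-\eps}\cdot(\text{const})$ by a more careful estimate). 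Meanwhile Step~\ref{step:knapsack:recurse-increase}, taken $k$ times, contributes a factor exceeding $(1+\eps^3)^k$. Hence $\Phi_{\text{final}} > (1-\eps^2)^{1/\eps}(1+\eps^3)^k\,\Phi_{\text{init}}$. Since $\Phi_{\text{final}}\le\opt$ (the objective of any feasible \iSDP\ solution at level $\ge 0$ is at most... wait --- this needs care: the \iSDP\ optimum is at least $\opt$, so conditioned solutions can exceed $\opt$?). Here one uses that $\Phi$ of \emph{any} point in $K_0(P)$ projected to $x_0=1$ is a feasible fractional \knapsack\ solution, hence $\Phi\le 2\,\opt$ by Lemma~\ref{lem:greedy} (or $\le \opt_{\mathrm{LP}}$, and $\opt_{\mathrm{LP}}\le 2\opt$ under the $c_i\le C$ assumption enforced via Lemma~\ref{lem:knapsack-0-1}). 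And $\Phi_{\text{init}}$, being the \iSDP\ optimum, is at least $\opt$. So $\Phi_{\text{final}}/\Phi_{\text{init}}\le 2$. Combining, $2 > (1-\eps^2)^{1/\eps}(1+\eps^3)^k$, i.e.\ $(1+\eps^3)^k < 2(1-\eps^2)^{-1/\eps}$, and the right-hand side is at most some absolute constant $<(1+\eps^3)^{1/\eps^3}$ for $\eps\le 0.3$, forcing $k<\lceil1/\eps^3\rceil$.

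The main obstacle I anticipate is pinning down the \emph{exact} constants so the arithmetic closes with the stated bound $\lceil1/\eps^3\rceil$ for all $\eps\le 0.3$ --- in particular checking that $(1-\eps^2)^{1/\eps}$ (the total loss from the $O(1/\eps)$ Step~\ref{step:knapsack:recurse-S-rho} calls) does not eat into the budget too much, and that $\Phi_{\text{final}}/\Phi_{\text{init}}$ is genuinely bounded by a constant (which requires invoking Lemma~\ref{lem:knapsack-0-1} to justify that we may assume all surviving items have $c_i\le C$, so that the trivial factor-$2$ integrality gap applies to every intermediate \iSDP\ solution, not just the first). A secondary point to handle cleanly is that the recursion in Step~\ref{step:knapsack:recurse-S-rho} and Step~\ref{step:knapsack:recurse-increase} are interleaved in an arbitrary order, so the potential argument must be stated multiplicatively over the whole execution rather than in phases; but since each step multiplies $\Phi$ by a well-defined factor ($>1+\eps^3$ for increase-steps, $\ge 1-\eps^2$ for $S_\rho$-steps, and $=1$ for the base-case steps which don't recurse), the telescoping product is straightforward once the per-step bounds are in hand.
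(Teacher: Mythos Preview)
Your proposal is correct and follows essentially the same approach as the paper: track the objective value $\Phi$ as a multiplicative potential, bound the loss from the at most $\lceil 1/\eps\rceil-1$ invocations of Step~\ref{step:knapsack:recurse-S-rho} (via Corollary~\ref{cor:knapsack-0-1}) by $(1-\eps^2)^{\lceil 1/\eps\rceil-1}$, the gain from $k$ invocations of Step~\ref{step:knapsack:recurse-increase} by $(1+\eps^3)^k$, and use $\Phi_{\text{init}}\ge\opt$ together with the factor-$2$ integrality gap to get $\Phi_{\text{final}}/\Phi_{\text{init}}\le 2$. The paper's proof simply asserts the numerical inequality $(1-\eps^2)^{\lceil 1/\eps\rceil-1}(1+\eps^3)^{\lceil 1/\eps^3\rceil}>2$ for $\eps\le 0.3$ without further comment, so your worries about the constants are in fact easily discharged; and your self-correction that $\Phi$ can exceed $\opt$ but not $2\,\opt$ is exactly the point the paper uses.
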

\begin{proof}
This follows by tracking the changes to the value of the objective function. Each time Step~\ref{step:knapsack:recurse-S-rho} is performed, the value of the objective function changes by a factor at least $(1-\eps^2)$, while each time Step~\ref{step:knapsack:recurse-increase} is performed, this value changes by a factor greater than $(1+\eps^3)$. Assuming, for the sake of contradiction, that Step~\ref{step:knapsack:recurse-increase} is performed $\lceil1/\eps^3\rceil$ times, then by Corollary~\ref{cor:knapsack-0-1} the total change is at least a factor $(1-\eps^2)^{\lceil 1/\eps\rceil-1}(1+\eps^3)^{\lceil1/\eps^3\rceil}>2$. However, this is a contradiction, since initially the value of the objective function is at least $\opt$, and the integrality gap is always at most $2$.
\end{proof}

Finally, we can prove that the algorithm gives a $(1+O(\eps))$-approximation, thus bounding the integrality gap. Theorem~\ref{thm:knapsack_ig} follows immediately from
the following.

\begin{theorem}\label{thm:knapsack-alg} For sufficiently small $\eps>0$, given an optimal (maximum feasible) solution $(1,\bx)$ to $K^+_\ell(P)$ for $\ell=O(1/\eps^3)$, there is a value $\rho\in\{r_i\mid i\in[n]\}\cup\{0\}$ such that algorithm {\bf{KS-Round}} finds a solution to the \knapsack\ instance with total value (reward) at least $(1-O(\eps))\cdot\sum_{i=1}^nr_ix_i$.
\end{theorem}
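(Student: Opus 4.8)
\textbf{Proof proposal for Theorem~\ref{thm:knapsack-alg}.}
The plan is to choose $\rho$ so that $S_\rho=S_{\eps\opt}$ (as remarked after Algorithm~\ref{alg:knapsack}, there is exactly one such value in $\{r_i\mid i\in[n]\}\cup\{0\}$), run {\bf{KS-Round}} with this $\rho$ and $\ell=\lceil 1/\eps^3\rceil+\lceil 1/\eps\rceil$ (which is $O(1/\eps^3)$), and track two quantities along the recursion: the level of the $\LS_+$ solution, and the value of the objective function $v(\bx):=\sum_i r_i x_i$. First I would verify the recursion terminates and stays within the available levels. By Lemma~\ref{lem:strict-increase} the algorithm is well-defined whenever Step~\ref{step:knapsack:choose-increase} is reached; by Corollary~\ref{cor:knapsack-0-1}, Step~\ref{step:knapsack:recurse-S-rho} is executed at most $\lceil 1/\eps\rceil-1$ times; and by Lemma~\ref{lem:recurse-depth}, Step~\ref{step:knapsack:recurse-increase} is executed fewer than $\lceil 1/\eps^3\rceil$ times. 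Each recursive call drops the $\LS_+$ level by exactly one (Fact~\ref{fact: ls conditioning}), so the total number of recursive calls is at most $\lceil 1/\eps^3\rceil+\lceil 1/\eps\rceil-2<\ell$, and every solution encountered lies in $K^+_{\ell'}(P)$ for some $\ell'\geq 1$, so all the lemmas apply. Hence the algorithm halts in one of the three ``greedy'' steps (Steps~\ref{step:knapsack:0-1}, \ref{step:knapsack:small-rewards}, or \ref{step:knapsack:recurse-S-rho}'s exhaustion leading to Step~\ref{step:knapsack:0-1}).

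Next I would bound the loss in objective value from the start of the recursion to termination. Each execution of Step~\ref{step:knapsack:recurse-S-rho} multiplies $v(\bx)$ by at least $(1-\eps^2)$ (that is the guard condition: $\sum_j r_j y_{\{i,j\}}\geq (1-\eps^2)x_i\sum_j r_j x_j$, and $v(\tfrac1{x_i}Y^{[\by]}\be_i)=\tfrac1{x_i}\sum_j r_j y_{\{i,j\}}$), and each execution of Step~\ref{step:knapsack:recurse-increase} multiplies $v(\bx)$ by more than $(1+\eps^3)>1$. So, writing $v_0\ge\opt$ for the initial value, the final solution $\bx^\ast$ satisfies $v(\bx^\ast)\geq (1-\eps^2)^{\lceil 1/\eps\rceil-1}\,v_0\geq (1-\eps^2)^{1/\eps}\opt\geq (1-O(\eps))\opt$. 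Crucially, at every stage the set $S_\rho$ is unchanged (it depends only on the fixed rewards and $\rho$), and by Fact~\ref{fact: ls integrality} any coordinate that has become integral stays integral under further conditioning; combined with Corollary~\ref{cor:knapsack-0-1} this means that whenever Step~\ref{step:knapsack:0-1} is finally reached, $\bx^\ast$ is $\{0,1\}$-valued on all of $S_\rho$.

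Then I would handle the three terminating cases. (i) If we reach Step~\ref{step:knapsack:0-1}, all $x_i^\ast$ on $S_\rho=S_{\eps\opt}$ are integral, so by Corollary~\ref{cor:greedy} the modified greedy returns reward $R'_G\geq v(\bx^\ast)-\max_{i\colon 0<x_i^\ast<1}r_i\geq v(\bx^\ast)-\eps\opt$ (the fractional items lie outside $S_{\eps\opt}$, hence have reward $\leq\eps\opt$); also one must check the precondition $c_i\le C$ for the remaining items, which holds by Lemma~\ref{lem:knapsack-0-1} since no fractional item fits alongside the chosen $S^1$ items. (ii) If we reach Step~\ref{step:knapsack:small-rewards}, the items in $S_\rho\setminus S^1$ carry less than $\eps\cdot v(\bx^\ast)$ of the reward, so discarding them and running modified greedy on $([n]\setminus S_\rho)\cup S^1$ loses at most an $\eps$ fraction of the objective plus an additive $\max_{i\notin S_\rho}r_i\le\eps\opt$ from Corollary~\ref{cor:greedy}; feasibility of the restricted fractional solution is immediate. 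In all cases the returned reward is at least $v(\bx^\ast)-O(\eps)\opt\geq (1-O(\eps))\opt\geq (1-O(\eps))\sum_i r_i x_i$ for the original $\bx$, since $\sum_i r_i x_i\le 2\opt$.

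The main obstacle is the bookkeeping in the second paragraph: one must argue simultaneously that the level budget is never exceeded \emph{and} that the objective never collapses, and these interact — the bound on the number of Step~\ref{step:knapsack:recurse-increase} calls in Lemma~\ref{lem:recurse-depth} itself uses the objective-tracking together with the a priori integrality-gap bound of $2$, while the objective-tracking needs to know the recursion is valid (i.e.\ Lemma~\ref{lem:strict-increase} applies, which needs $\ell'\ge 1$). The clean way around this is to set up a single potential/counter argument: maintain the invariant ``(number of Step~\ref{step:knapsack:recurse-S-rho} calls so far) $+$ (number of Step~\ref{step:knapsack:recurse-increase} calls so far) $\leq$ (levels consumed)'' and ``$v(\text{current})\ge (1-\eps^2)^{\#\text{Step~\ref{step:knapsack:recurse-S-rho}}}\opt$'', and observe that as long as $v(\text{current})\le 2\opt$ (which always holds) and Step~\ref{step:knapsack:recurse-S-rho} has fired at most $\lceil1/\eps\rceil-1$ times (Corollary~\ref{cor:knapsack-0-1}), Step~\ref{step:knapsack:recurse-increase} can fire at most $\lceil 1/\eps^3\rceil-1$ times, so the level $\ell=O(1/\eps^3)$ suffices and every invoked lemma is legitimate. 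Once this is in place the approximation bound follows by combining the displayed inequalities above.
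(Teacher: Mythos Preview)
Your approach is essentially the paper's: pick $\rho$ so that $S_\rho=S_{\eps\opt}$, bound the recursion depth via Corollary~\ref{cor:knapsack-0-1} and Lemma~\ref{lem:recurse-depth}, track the multiplicative change in the objective, and finish with Corollary~\ref{cor:greedy} in the two terminating cases. The extra care you take with the level-budget/objective ``circularity'' is fine and more explicit than the paper's one-line appeal to the lemmas.

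There is, however, one genuine slip in your final chain of inequalities. You conclude with
\[
\text{reward}\;\geq\; v(\bx^\ast)-O(\eps)\opt\;\geq\;(1-O(\eps))\opt\;\geq\;(1-O(\eps))\sum_i r_i x_i,
\]
justifying the last inequality by $\sum_i r_i x_i\le 2\opt$. But that bound only gives $(1-O(\eps))\opt\geq \tfrac12(1-O(\eps))\sum_i r_i x_i$, which is not $(1-O(\eps))\sum_i r_i x_i$; indeed $\opt\leq\sum_i r_i x_i$ in general (the relaxation dominates the integral optimum), so the inequality you wrote goes the wrong way. The fix is already implicit in your own argument: do not drop from $v_0$ to $\opt$. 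You correctly derived $v(\bx^\ast)\geq (1-\eps^2)^{\lceil 1/\eps\rceil-1}v_0=(1-O(\eps))v_0$ with $v_0=\sum_i r_i x_i$. Then, since $\opt\leq v_0$, in either terminating case
\[
\text{reward}\;\geq\;(1-\eps)\,v(\bx^\ast)-\eps\,\opt\;\geq\;(1-O(\eps))v_0-\eps\,v_0\;=\;(1-O(\eps))\sum_i r_i x_i,
\]
which is exactly the conclusion of the theorem. This is precisely how the paper proceeds (it tracks $\Phi=v_0$ throughout rather than $\opt$).
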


\begin{proof}[Proof of Theorem~\ref{thm:knapsack-alg}.] It is easy to see that for (exactly one) $\rho\in\{r_i\mid i\in[n]\}\cup\{0\}$ we have $S_\rho=S_{\eps\opt}$. Choose this value of $\rho$.

Note that the only time the value of the objective function can decrease is in Step~\ref{step:knapsack:recurse-S-rho}. Let $\Phi$ be the initial value of the objective function. Then by Corollary~\ref{cor:knapsack-0-1}, at the end of the recursion, the new value of the objective function will be at least $(1-\eps^2)^{1/\eps}\cdot\Phi = (1-O(\eps))\Phi$. Thus, it suffices to show the theorem relative to the final value of the objective function (as opposed to $\Phi$)\footnote{In fact, this will also show that the integrality gap remains at most $1+O(\eps)$ throughout the algorithm (as opposed to $2$, which we used in the proof of Lemma~\ref{lem:recurse-depth}), thus bounding the depth of the recursion by $O(1/\eps^2)$ rather than $O(1/\eps^3)$.}.

Let us consider the algorithm step-by-step. If the algorithm terminates in Step~\ref{step:knapsack:0-1}, then since for all $i\in[n]\setminus S_{\eps\opt}$ we have $r_i\leq\eps\cdot\opt$, by Corollary~\ref{cor:greedy} the rounding loses at most a $(1+O(\eps))$ factor relative to the value of the objective function.

If the algorithm terminates in Step~\ref{step:knapsack:small-rewards}, then similarly, the rounding loses at most a $(1+O(\eps))$-factor relative to
$\sum_{i\in[n]\setminus S_\rho}r_ix_i>(1-\eps)\sum_{i=1}^nr_ix_i$.

Finally, note that by Corollary~\ref{cor:knapsack-0-1} and Lemma~\ref{lem:recurse-depth}, we have sufficiently many levels of the hierarchy to justify the recursive Steps~\ref{step:knapsack:recurse-S-rho}  and~\ref{step:knapsack:recurse-increase}, and note that the choice of item $i$ in Step~\ref{step:knapsack:choose-increase} is well-defined by Lemma~\ref{lem:increase-obj-fun}.
\end{proof}

\section{Conclusion} \label{sec:conclusion}

The known sub-exponential time approximation algorithms place \setc\ in a distinct category from 
other optimization problems like \threesat\ in the following sense. Though one can achieve provably hard approximation factors for \setc\ in sub-exponential time, Moshkovitz and Raz~\cite{moshkovitz:raz} show that improving over the easy $\frac87$-approximation for \threesat{}~\cite{johnson} by any constant requires time $2^{n^{1-o(1)}}$, assuming the ETH.

Rather, \setc\ lies in the same category of problems as \chrom\ and \clique\ both of which admit $n^{1-\eps}$ approximations in time $2^{\tilde{O}(n^\eps)}$ (by partitioning the graph into sets of size $n^{\eps}$ and solving the problem optimally on these sets), despite the known $n^{1-o(1)}$ hardness of approximation for both problems~\cite{hastad,FK,zuckerman}. This may also be taken as evidence that the recent subexponential time algorithm of Arora, Barak, and Steurer~\cite{arora:barak:steurer} for \ug\ does not necessarily imply that \ug\ is not hard, or not as hard as other NP-hard optimization problems.

Finally, turning to lift-and-project methods, we note here that our $\LS_+$-based algorithm for \knapsack\ shows that in some instances reduced integrality gaps which rely heavily on properties of the \la\ hierarchy can be achieved using the weaker $\LS_+$ hierarchy. This raises the question of whether the problems discussed in the recent series of \la-based approximation algorithms~\cite{BRS11,GS11,RT} also admit similar results using $\LS_+$. On the flip side, it would also be interesting to see whether any such problems have strong integrality gap lower bounds for $\LS_+$, which would show a separation between the two hierarchies.


\paragraph{Acknowledgements} We would like to thank Dana Moshkovitz for pointing out the blowup in her \setc\ reduction. We would also like to thank Mohammad R. Salavatipour for preliminary discussions on sub-exponential time approximation algorithms in general, and  Claire Mathieu for insightful past discussions of the \knapsack-related results in~\cite{KMN11}. Finally, we would like to thank Marek Cygan for bringing~\cite{cygan:kowalik:wykurz} to our attention. 


\bibliographystyle{plain}
\bibliography{setcover}

\appendix

\section{A Combinatorial Approximation Algorithm} \label{sec:combapprox}

The following result is well-known and follows from standard dual-fitting techniques.
\begin{theorem}[Chv\'atal~\cite{chvatal}, paraphrased]\label{thm:greedy}
The greedy algorithm is a polynomial-time $H_b$-approximation where $b$ is the size of the largest cover-set in $\mathcal S$.
Moreover, it finds a solution whose cost is at most $H_b$ times the optimum value of the standard {\iLP} relaxation.
\end{theorem}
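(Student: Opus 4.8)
The plan is to prove Theorem~\ref{thm:greedy} by the classical \emph{dual-fitting} argument: we charge the cost of the greedy solution to a feasible solution of the LP dual, and invoke weak LP duality.

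First I would write down the dual of the \setc\ \iLP\ relaxation \eqref{eq:setcover-bound} (dropping the redundant bounds $x_S\le 1$):
\[
\max\Big\{\textstyle\sum_{i\in X}y_i \ :\ \sum_{i\in S}y_i\le c(S)\ \ \forall S\in\mathcal S,\ \ y\ge 0\Big\}.
\]
By weak LP duality, every feasible dual vector $y$ satisfies $\sum_{i\in X}y_i\le \mathrm{LP}^\star\le\opt$, so it suffices to exhibit a feasible $y$ with $\sum_i y_i$ equal to (greedy cost)$/H_b$.

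Next I would run the greedy algorithm and define \emph{prices}: whenever greedy selects a cover-set $S$ that covers $k\ge 1$ previously uncovered items, each of those $k$ items receives price $p(i):=c(S)/k$, the current density of $S$. Each item is priced exactly once (when first covered), and the cost of each selected set is exactly the sum of the prices of the items it newly covers, so the total cost of the greedy solution equals $\sum_{i\in X}p(i)$. (If the instance is feasible, greedy clearly terminates with a cover in polynomial time; if it is infeasible, the primal LP is infeasible and the statement is vacuous. Zero-cost sets simply yield zero prices and cause no trouble.) The claim is then that $y_i:=p(i)/H_b$ is dual feasible. Fix $S$ with $|S|=t\le b$ and order its elements $i_1,\dots,i_t$ by the time greedy first covers them. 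At the step when $i_j$ is covered, the items $i_j,i_{j+1},\dots,i_t$ are all still uncovered, so $S$ is an eligible candidate with at least $t-j+1$ uncovered items; since greedy chooses a set of minimum density, $p(i_j)$ is at most the density of $S$ at that step, which is at most $c(S)/(t-j+1)$. Summing, $\sum_{i\in S}p(i)\le c(S)\sum_{j=1}^{t}\frac1{t-j+1}=c(S)H_t\le c(S)H_b$, hence $\sum_{i\in S}y_i\le c(S)$. Therefore the greedy cost equals $\sum_i p(i)=H_b\sum_i y_i\le H_b\cdot\mathrm{LP}^\star\le H_b\cdot\opt$, which is exactly the theorem.

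The only delicate point — the "main obstacle", such as it is — is the ordering/eligibility step establishing $p(i_j)\le c(S)/(t-j+1)$: one must be careful that $S$ is still a legitimate candidate (it contains the uncovered element $i_j$) precisely at the moment $i_j$ receives its price, and that at that moment at least $t-j+1$ of the elements of $S$ remain uncovered, so that the minimum-density choice of greedy forces the stated bound. Everything else (the accounting that greedy cost $=\sum_i p(i)$, and that already-covered elements contribute nothing) is routine bookkeeping.
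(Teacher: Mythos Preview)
Your proposal is correct and is exactly the standard dual-fitting argument the paper has in mind: the paper does not give its own proof of Theorem~\ref{thm:greedy}, but simply attributes it to Chv\'atal and remarks that it ``follows from standard dual-fitting techniques.'' Your write-up is precisely that standard argument, with the key inequality $p(i_j)\le c(S)/(t-j+1)$ handled correctly.
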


Motivated by the former Theorem, we present next our combinatorial algorithm (sketched in Section~\ref{sec:combapprox sketch}) in full details. 

\begin{algorithm*}[ht]
  \caption{~An Improved Approximation for \setc} \label{alg:sub-exp}
\begin{algorithmic}[1] 
\State $\mathcal C \leftarrow \mathcal S$
\For {each collection $\mathcal D$ of at most $d$ sets in $\mathcal S$}
\State Let $\mathcal T \leftarrow \{S \in \mathcal S : |S \setminus \bigcup_{T \in \mathcal D} T| \leq \frac{n}{d}\}$ 
\State {\bf if} $\mathcal T \cup \mathcal D$ cannot cover $X$ {\bf then} skip to the next iteration
\While {$\mathcal D$ does not cover $X$} \label{step:greedystart}
\State Let $S \leftarrow \arg \min_{S \in \mathcal T} \frac{c(S)}{|S \setminus \bigcup_{T \in \mathcal D} T|}$
\State $\mathcal D \leftarrow \mathcal D \cup \{S\}$
\EndWhile \label{step:greedyend}
\State {\bf if} $\mathcal D$ is cheaper than $\mathcal C$ {\bf then} $\mathcal C \leftarrow \mathcal D$
\EndFor
\State{\bf return} $\mathcal C$
\end{algorithmic}
\end{algorithm*}
The outer loop of Algorithm~\ref{alg:sub-exp} is reminiscent of the steps in many PTASes that ``guess'' the largest items to be used  (for example, many \knapsack\ variants such as in~\cite{chekuri:khanna}).
This is almost a correct interpretation of the guesswork done in Algorithm~\ref{alg:sub-exp}. The following simple structural result will help articulate this notion in the proof
of the main result. 

\begin{lemma}\label{lem:order}
If $\mathcal C$ is solution to a \setc\ instance $(X, \mathcal S)$, then there is an ordering $S_1, \ldots, S_k$ of the cover-sets in $\mathcal C$
such that $|S_i \setminus \bigcup_{j=1}^{i'-1} S_j| \leq \frac{n}{i'}$ for any $1 \leq i' \leq i \leq k$.
\end{lemma}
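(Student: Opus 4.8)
The plan is to build the required ordering greedily. First I would set $S_1$ to be a cover-set of $\mathcal C$ maximizing $|S_1|$, and then, having chosen $S_1,\dots,S_{i-1}$, let $S_i$ be a cover-set in $\mathcal C\setminus\{S_1,\dots,S_{i-1}\}$ maximizing the number of not-yet-covered items, i.e.\ maximizing $|S\setminus\bigcup_{j=1}^{i-1}S_j|$ (breaking ties arbitrarily, and once every item of $X$ is covered, simply appending the remaining cover-sets of $\mathcal C$ in any order). Write $\alpha_i:=|S_i\setminus\bigcup_{j=1}^{i-1}S_j|$ for the marginal coverage of $S_i$ in this ordering.

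Next I would establish two elementary properties. First, the sequence is non-increasing, $\alpha_1\geq\alpha_2\geq\cdots\geq\alpha_k$: if $S^\ast$ attains the maximum at step $i$, then $S^\ast$ was already among the candidates at step $i-1$ (as $\{S_1,\dots,S_{i-2}\}\subseteq\{S_1,\dots,S_{i-1}\}$), and $|S^\ast\setminus\bigcup_{j=1}^{i-2}S_j|\geq|S^\ast\setminus\bigcup_{j=1}^{i-1}S_j|=\alpha_i$, so $\alpha_{i-1}\geq\alpha_i$. Second, $\sum_{i=1}^k\alpha_i\leq n$, because each item of $X$ contributes $1$ only to the term $\alpha_i$ for the smallest index $i$ with that item in $S_i$, and to no other term. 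Combining the two properties gives $i'\cdot\alpha_{i'}\leq\sum_{j=1}^{i'}\alpha_j\leq n$, hence $\alpha_{i'}\leq n/i'$ for every $i'$.

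To finish, I would fix any $1\le i'\le i\le k$ and note that $S_i$ is still a candidate at step $i'$ (it has not yet been chosen), so the greedy choice at step $i'$ gives $|S_i\setminus\bigcup_{j=1}^{i'-1}S_j|\leq|S_{i'}\setminus\bigcup_{j=1}^{i'-1}S_j|=\alpha_{i'}\leq n/i'$, which is exactly the claimed bound. I do not expect any real obstacle here: the whole argument is the greedy choice plus a counting bound on $\sum_i\alpha_i$. The only point requiring slight care is the ``zero tail'' of cover-sets appended after $X$ is already covered, but the final step handles these uniformly, since such an $S_i$ is still a legitimate candidate at any earlier step $i'\le i$.
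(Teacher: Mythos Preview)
Your proof is correct and is essentially identical to the paper's: both construct the ordering greedily by repeatedly picking the set with largest marginal coverage, define $\alpha_i$ as these marginals, use $\sum_i\alpha_i\leq n$ together with monotonicity to get $\alpha_{i'}\leq n/i'$, and then invoke the greedy choice at step $i'$ to bound $|S_i\setminus\bigcup_{j<i'}S_j|$ for $i\geq i'$. Your write-up is in fact slightly more careful than the paper's terse version (you justify monotonicity explicitly and note the ``zero tail'' issue), but the underlying argument is the same.
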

\begin{proof}
Order $\mathcal C$ greedily: Iteratively select the set in $\mathcal C$ that covers the most uncovered items. Then, by definition of this ordering, for the first $i'$ sets we have $n\geq |\bigcup_{j=1}^{i'}S_j|\geq i'\cdot|S_{i'} \setminus \bigcup_{j=1}^{i'-1} S_j|$. Moreover, for $i>i'$, since $S_{i'}$ is chosen instead of $S_i$, we must have $|S_{i} \setminus \bigcup_{j=1}^{i'-1} S_j|\leq|S_{i'} \setminus \bigcup_{j=1}^{i'-1} S_j|\leq\frac{n}{i'}$.
\end{proof}
We are now ready to give all details for the combinatorial proof of Theorem~\ref{thm:approx}.
\begin{proof}[Proof of Theorem~\ref{thm:approx}]
Let $\mathcal C^*$ be an optimal solution of cost $\OPT$ and size, say, $k$. Furthermore, let $S^*_1, \ldots, S^*_k$ be an ordering of the cover-sets in $\mathcal C^*$ with the property guaranteed by
Lemma~\ref{lem:order}. If $k \leq d$ then the set $\mathcal C^*$ will be considered in some iteration so Algorithm~\ref{alg:sub-exp} will actually find an optimal solution.

Otherwise, consider the iteration of the outer loop that guessed $\mathcal D = \{S^*_1, \ldots, S^*_d\}$. Consider the \setc\ instance $(Y, \mathcal T)$ with items
$Y := X \setminus \bigcup_{T \in \mathcal D} T$ and cover-sets
$\mathcal T := \{ S \setminus \bigcup_{T \in \mathcal D} T : S \in \mathcal S, |S \setminus \bigcup_{T \in \mathcal D} T| \leq \frac{n}{d}\}$. The cost of the
cover-sets $S \in \mathcal T$ in this instance should be equal
to their original costs (before we subtracted $\bigcup_{T \in \mathcal D} T$).
Each cover-set in $\mathcal T$ has size at most $\frac{n}{d}$ so, by Theorem
\ref{thm:greedy}, the greedy algorithm applied to this instance is an $H_{n/d}$-approximation. Lemma~\ref{lem:order} guarantees
that the restriction of the cover-sets $S^*_{d+1}, \ldots, S^*_k$ to $X \setminus \bigcup_{T \in \mathcal D} T$ have size at most $\frac{n}{d}$ so these restrictions form a valid solution
for $(Y, \mathcal T)$ of cost $\sum_{i=d+1}^k c(S^*_i)$. Thus, the optimum solution to the \setc\ instance $(Y, \mathcal T)$ has cost $\sum_{i=d+1}^k c(S^*_i)$.
Steps~\ref{step:greedystart}-\ref{step:greedyend} act just like the greedy algorithm on this instance $(Y, \mathcal T)$ which, by Theorem~\ref{thm:greedy},
adds a total of $H_{\frac{n}{d}} \cdot \sum_{i=d+1}^k c(S^*_i)$ to the cost of $\mathcal D$. The overall cost of the final solution $\mathcal D$ is at most
$\sum_{i=1}^d c(S^*_i) + H_{\frac{n}{d}} \cdot \sum_{i=d+1}^k c(S^*_i) \leq H_{\frac{n}{d}} \cdot \sum_{i=1}^k c(S^*_i) = H_{\frac{n}{d}} \cdot \OPT.$
Finally, note that there are $m^{O(d)}$ iterations of the outer loop, each taking polynomial time.
\end{proof}

\end{document}